\newtheoremstyle{mystyle}
  {3pt}
  {3pt}
  {\itshape}
  {}
  {\bfseries}
  {}
  { }
  {\thmname{#1}\space\thmnumber{#2}\thmnote{ (#3)}}
\theoremstyle{mystyle}
\newtheorem{theorem}{Theorem}[section]
\newtheorem{proposition}[theorem]{Proposition}
\newtheorem{lemma}[theorem]{Lemma}
\newtheorem{example}[theorem]{Example}
\newtheorem{assumption}[theorem]{Assumption}
\newcommand{\eS}{\mathrm{esssup}}
\def\R{\mathbb{R}}
\def\P{\mathbb{P}}
\newcommand{\dd}{{\mathrm{d}}}
\title{The Martingale Problem for Geometric stable-like Processes}
\author[S.R. Iyer]{Sarvesh Ravichandran Iyer}
\address{
    Visiting Faculty\\
    Mathematics Department\\
    Ashoka University\\
    Sonipat, Haryana 131029\\
    India
}
\email{sarvesh.iyer@ashoka.edu.in/sarveshiyer@gmail.com}
\keywords{Martingale problem, geometric stable process, Lévy-type operators, pure jump Lévy processes}
\numberwithin{equation}{section}
\begin{document}

\begin{abstract}
We prove that the martingale problem is well posed for pure-jump Lévy-type operators of the form $$
(\mathcal Lf)(x) = \int_{\mathbb R^d \setminus \{0\}} \left(f(x+h)-f(x) - (\nabla f(x) \cdot h)1_{\|h\| < 1}\right)K(x,h)\dd h,
$$
where $K(x,\cdot)$ is a jump kernel of the form $K(x,h) \sim \frac{l(\|h\|)}{\|h\|^d}$ for each $x \in \mathbb R^d,\|h\|<1$, and $l$ is a positive function that is slowly varying at $0$, under suitable assumptions on $K$. This includes jump kernels such as those of $\alpha$-geometric stable processes, $\alpha \in (0,2]$. 
\end{abstract}

\maketitle

\section{Introduction}

Given an operator $\mathcal{A}$, the existence and uniqueness in law of a strong Markov process whose infinitesimal generator is $\mathcal{A}$ is an important question. In many situations, the laws of such processes can be characterised as solutions to martingale problems. Then, the question of uniqueness in law is in many instances equivalent to the well-posedness of the associated martingale problem.

A class of operators for which the well-posedness of the martingale problem has been studied is that of symmetric pure jump Lévy-type operators $\mathcal L : C_b^2(\mathbb R^d) \to L^{\infty}(\mathbb R^d)$. These are of the form
\begin{align}\label{gsl}
(\mathcal Lf)(x) &= \int_{\mathbb R^d \setminus \{0\}} \left(f(x+h)-f(x) - (\nabla f(x) \cdot h)1_{\|h\| < 1}\right)K(x,h)\dd h \nonumber\\
& = \frac 12\int_{\mathbb R^d \setminus \{0\}} \left(f(x+h)+f(x-h) - 2f(x)\right)K(x,h)\dd h,
\end{align} 
where $K : \R^d \times \R^d \setminus \{0\} \to \R_+$ satisfies the following properties : \begin{gather*}
\int_{\mathbb R^d \setminus \{0\}} (1 \wedge \|h\|^2) K(x,h)\dd h < \infty \quad \text{for all } x \in \mathbb R^d, \text{ and} \\
K(x,h) = K(x,-h) \quad \text{for all } x \in \mathbb R^d, h \in \R^d \setminus \{0\}.
\end{gather*}
For the rest of this article, we refer to $K$ as the \emph{kernel} corresponding to $\mathcal{L}$.

In \cite{BT}, the martingale problem was proved to be well-posed for operators of the form \begin{equation}\label{eq:bt}
K(x,h) = \frac{A(x,h)}{\|h\|^{d+\alpha}},
\end{equation}
for some $\alpha \in (0,2)$ and $A : \mathbb R^d \times \R^d \setminus \{0\} \to \mathbb R_+$ which satisfies the following conditions : there exists $c_1,c_2>0$ such that \begin{equation}\label{eq:bddness}
c_1 \leq A(x,h) \leq c_2\quad \text{for all } x \in \R^d, h \in \R^d \setminus\{0\},
\end{equation} 
and there exists $\eta > 0$ such that for every $y \in \R^d$ and every $b > 0$,
\begin{equation} \label{eq:cty}
\lim_{x \to y} \sup_{\|h\|\leq b} |A(x, h) - A(y, h)| (1 + (\log
(1/\|x\|) \vee 0))^{1+\eta} = 0.
\end{equation}
These operators are linked to the $\alpha$-stable processes as follows. The $\alpha$-stable process on $\mathbb R^d, d \geq 1$, is a pure jump L\'{e}vy process with jump kernel given by $h \mapsto \frac{C_{\alpha}}{\|h\|^{d+\alpha}}$ for all $h \in \mathbb R^d \setminus \{0\}$ and some constant $C_{\alpha}>0$. Thus, the kernel $K$ defined in \eqref{eq:bt} is a perturbation $A$ of the jump kernel of an $\alpha$-stable process, where $A$ is bounded (see \eqref{eq:bddness}) and satisfies a continuity condition (see \eqref{eq:cty}). The processes that solve the martingale problem for a kernel $K$ of the form \eqref{eq:bt} are known as stable-like processes in the literature (see \cite{CK1} for the terminology). 

In this paper, we consider the geometric stable process of index $\alpha$, $\alpha\in (0,2)$ on $\mathbb R^d, d \geq 1$, which is a subordinate Brownian motion with characteristic exponent given by $$
\phi_{\alpha}(\lambda) = \log(1+\|\lambda\|^{\alpha}).
$$ 

The geometric stable process is a pure-jump L\'{e}vy process, and possesses a jump kernel $j_{\alpha} : \mathbb R^d \setminus \{0\} \to \mathbb R_+$ which satisfies the following asymptotic inequality : there exist $0<c_1<c_2$ such that 
$$
\frac{c_1}{\|h\|^d} \leq j_{\alpha}(h) \leq \frac{c_2}{\|h\|^d}\quad \text{for all $0<\|h\| \leq 1$}.
$$ 

If we define the kernel $K(x,h) = j_{\alpha}(h)$ in \eqref{gsl}, the resulting operator does not satisfy the assumptions of \cite{BT}. To the best of our knowledge, such kernels have not been studied before in the context of the martingale problem for operators of the form $\mathcal L$ in \eqref{gsl}.

Our aim in this paper is to prove the well-posedness of the martingale problem for operators whose kernels $K(x,\cdot) = A(x,\cdot)J(\cdot)$ are suitable perturbations $A(x,h)$ of the jump kernel $J(h)$ of a geometric stable process of index $\alpha$ for each $x \in \mathbb R^d$, where $\alpha \in (0,2]$. That is, for some constants $C_1,C_2>0$, $$
\frac{C_1}{\|h\|^{d}}\leq J(h) \leq \frac{C_2}{\|h\|^{d}}
$$
for all $x\in \R^d, 0 < \|h\|<1$, and $A$ satisfies a boundedness assumption similar to \eqref{eq:bddness} and a continuity assumption similar to \eqref{eq:cty}. Our proof technique is a suitable modification of the proof of \cite[Theorem 2.2]{B} and covers a larger class of kernels $K$, as described in Assumption~\ref{ass}.

The paper is organized as follows. In Section~\ref{mr}, we define the martingale problem and state the main result of this paper, Theorem~\ref{thm}. In Section~\ref{mli}, we discuss the motivation behind the problem and provide an outline of the proof. In Section~\ref{pf}, we state some preliminary lemmas and key propositions, and prove the theorem. These propositions are proved subsequently in Sections~\ref{sec:cty}, \ref{pfresreg}, \ref{pfrpb} and \ref{sec:loc}. 

Throughout this paper, we fix a dimension $d \geq 1$. All constants which may change from line to line are denoted by $C$, while those whose values are important will have a subscript e.g. $C_1,C_2$. All integration in this article will be performed with respect to the Lebesgue measure on $\R^d$. For a set $A \subset \mathbb R^d$, $1_A$ denotes the indicator function of $A$.

\section{Main result} \label{mr}

In this section, we state the main result of this paper, Theorem~\ref{thm}. We first define the martingale problem. Then we state our assumptions, followed by the theorem. Throughout this section, let $\mathcal L$ be as in \eqref{gsl} with kernel $K$.

We begin by introducing some function spaces needed for defining the martingale problem. Let $$
L^{\infty}(\R^d) = \{f : \mathbb R^d \to \mathbb R,\ \eS_{\mathbb R^d} \|f\| < \infty\}
$$ 
be the space of all essentially bounded (with respect to the Lebesgue measure) functions on $\mathbb R^d$, equipped with the norm $\|f\|_{\infty} = \eS_{\mathbb R^d} |f|$. Let $C_b^k(\mathbb R^d)$ be the set of all bounded functions $f(x_1,\ldots,x_d) : \mathbb R^d \to \mathbb R$ which are $k$-times differentiable such that all partial derivatives up to the $k$th order are bounded. $C_b^k(\mathbb R^d)$ is equipped with the norm
\begin{equation}\label{norm}
\|f\|_{C_b^k(\mathbb R^d)} = \sum_{|\alpha| \leq k} \left\|\frac{\partial^{|\alpha|} f}{\partial \mathbf{x}^{\alpha}}\right\|_{\infty}.
\end{equation}
Let $H_f$ denote the Hessian of $f$ i.e. the matrix of second partial derivatives of $f$, and $\|H_f\|_{\infty}$ be its spectral norm.

Let $\Omega = D([0,\infty); \mathbb R^d)$ be the set of all c\'{a}dl\'{a}g functions $f : [0,\infty) \to \mathbb R^d$ (i.e. those which are right continuous and possess left limits at all points). The space $\Omega$ is endowed with the Skorokhod topology (see \cite[Chapter 6]{JS} for the definition). Define the canonical coordinate process $\{X_t\}_{t \geq 0}$ by $X_t(\omega) = \omega(t)$ for all $\omega\in \Omega, t \geq 0$. Let $\{\mathcal F_t\}_{t \geq 0}$ be the right continuous augmentation of the natural filtration of $\{X_t\}_{t \geq 0}$ and let $\mathcal F_{\infty} = \sigma(\cup_{n \geq 1} \mathcal F_n)$.

We are now ready to define the martingale problem. Given $x \in \mathbb R^d$, a probability measure $\mathbb P_x$ on $\Omega$ solves the \emph{martingale problem} for $\mathcal L$ started at $x$ if $\mathbb P_x(X_0 = x) = 1$ and the process $\{M^f_t\}_{t \geq 0}$ given by \begin{equation}\label{eq:mart}
M^f_t = f(X_t) - f(X_0) - \int_0^t \mathcal Lf(X_s)ds
\end{equation}
is an $\{\mathcal F_t\}_{t \geq 0}$-martingale for all $f \in C_b^2(\mathbb R^d)$. A collection of probability measures $\{\mathbb P_x\}_{x \in \mathbb R^d}$ is called a \emph{strong Markov family} of solutions to the martingale problem for $\mathcal L$ if $\mathbb P_x$ solves the martingale problem for $\mathcal L$ started at $x$ for every $x \in \mathbb R^d$, and the strong Markov property holds for the process $\{X_t\}_{t \geq 0}$. That is, if $\mathbb E_x$ denotes the expectation with respect to $\P_x$ for every $x \in \mathbb R^d$, then
$$
\mathbb E_x[Y \circ \theta_T | \mathcal F_T] = \mathbb E_{X_T}[Y] \quad \mathbb P_x \text{-a.s.},
$$
for every $x \in \R^d$, finite stopping time $T$ and bounded $\mathcal{F}_{\infty}$-measurable random variable $Y$. Here, \begin{equation}\label{eq:shift}
(\theta_T(\omega))(s) = \omega(s + T(\omega))
\end{equation} 
is the shift operator by the random variable $T$ on $\Omega$. 

We will now state our assumption on the kernel $K$. 

\begin{assumption}\label{ass}
There exist $A : \mathbb R^d \times \mathbb R^d \setminus \{0\} \to \mathbb R_+$ and $J : \mathbb R^d \setminus \{0\} \to \mathbb R_+$ such that 
\begin{equation}\label{eq:dec}
K(x,h) = A(x,h)J(h)
\end{equation} 
for all $x \in \R^d, h \in \R^d \setminus\{0\}$. Further,
\begin{enumerate}[label = (\alph*)]
\item We have $\int_{\mathbb R^d \setminus \{0\}} (1 \wedge \|h\|^2)J(h)\dd h = K_0 < \infty$.
\item For all $x \in \R^d$, $h \in \mathbb R^d \setminus \{0\}$, $K(x,h)=K(x,-h)$.
\item There exist $\kappa>1$ and $l:(0,1) \to \mathbb R_+$ which is slowly varying at $0$ and satisfies \begin{equation}\label{eq:int}
\int_{0}^1 \frac{l(s)}{s} \dd s = +\infty,
\end{equation}
such that $$\kappa^{-1} \frac{l(\|h\|)}{\|h\|^d} \leq J(h) \leq \kappa \frac{l(\|h\|)}{\|h\|^d} \quad \text{for all } 0 < \|h\| \leq 1.$$ 
\item There exist $c_1,c_2>0$ such that $c_1 \leq A(x,h) \leq c_2$ for all $x \in \mathbb R^d, h \in \mathbb R^d \setminus \{0\}$.
\item There exists $\psi : \mathbb R_+ \to \mathbb R_+$ such that 
\begin{enumerate}[label = (\roman*)]
\item $\psi(s) = 1$ for all $s \geq 1$.
\item For all $b,\epsilon >0$ there exists $\delta >0$ such that \begin{equation}\label{eq:modcty}
\|y-x\| < \delta \implies \sup_{\|h\|<b}|A(x,h) -A(y,h)|\psi(\|h\|) < \epsilon.
\end{equation}
\item We have \begin{equation}\label{eq:modint}
\int_{\mathbb R^d \setminus \{0\}} \frac{J(h)}{\psi(\|h\|)} \dd h = \mathcal{J} < \infty.
\end{equation}
\end{enumerate}
\end{enumerate}
\end{assumption}

We now state the main result of this paper.

\begin{theorem}\label{thm}
There exists a unique strong Markov family of solutions $\{\mathbb P_x\}_{x \in \mathbb R^d}$ to the martingale problem for any operator $\mathcal{L}$ of the form \eqref{gsl} whose kernel $K$ satisfies Assumption~\ref{ass}.
\end{theorem}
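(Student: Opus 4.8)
The plan is to follow the resolvent–perturbation scheme of Stroock--Varadhan type, adapted to pure-jump operators in the spirit of \cite{B}. Existence is the softer half. Since $A$ is bounded and $J$ is a genuine Lévy profile (Assumption~\ref{ass}(a),(d)), I would obtain a solution for each starting point by truncating the small jumps, i.e. replacing $J(h)$ by $J(h)1_{\|h\|>1/n}$, so that the resulting operators $\mathcal L_n$ are bounded and correspond to finite-activity jump processes whose martingale problems are classically well posed; tightness of the laws $\{\P^n_x\}$ in the Skorokhod space, together with the convergence $\mathcal L_n f \to \mathcal L f$ guaranteed by the continuity assumption \eqref{eq:modcty}, then lets me pass to a weak limit solving the martingale problem for $\mathcal L$. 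The crux is uniqueness. By the localization theorem for martingale problems (Section~\ref{sec:loc}) it suffices to prove uniqueness when the coefficient $A$ has globally small oscillation: fixing $x_0$, I replace $A(x,\cdot)$ by $A(x_0,\cdot)$ outside a small ball about $x_0$, so that by \eqref{eq:modcty} the $\psi$-weighted oscillation of the modified coefficient is as small as desired, and local well-posedness of the modified operators propagates to $\mathcal L$.

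Working with the small-oscillation operator, I freeze the coefficient to obtain the reference Lévy operator $\mathcal L_{x_0}$ with Lévy measure $A(x_0,h)J(h)\,\dd h$, and write $U^{x_0}_\lambda$ for its $\lambda$-resolvent. The first technical input (Section~\ref{pfresreg}) is a regularity estimate: for $f$ in a suitable class, $u = U^{x_0}_\lambda f$ lies in $C_b^2(\R^d)$ with quantitative, $x_0$-uniform control of $u$ and of the symmetric second increments $u(x+h)+u(x-h)-2u(x)$. Because the geometric-stable profile $\sim l(\|h\|)/\|h\|^d$ regularizes only weakly — at logarithmic rather than polynomial order — these bounds cannot exploit the $\alpha$-order smoothing available for the kernels \eqref{eq:bt} of \cite{BT}, and must instead be extracted directly from the comparability in Assumption~\ref{ass}(c) together with the divergence \eqref{eq:int}.

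The second input (Section~\ref{pfrpb}) is the perturbation bound. Writing $R_\lambda f(x) = (U^x_\lambda f)(x)$ for the diagonally frozen resolvent, one computes $(\lambda - \mathcal L)R_\lambda f = f - S_\lambda f$, where the freezing error is $S_\lambda f(x) = \bigl[(\mathcal L_x - \mathcal L)\,U^x_\lambda f\bigr](x)$. I would bound $S_\lambda f(x)$ by splitting its defining integral at $\|h\|=1$: the small-jump part is controlled by the second-increment estimate of Section~\ref{pfresreg} against $\int_{\|h\|<1}\|h\|^2 J(h)\,\dd h$, while the large-jump part is controlled using $\|u\|_\infty$, the $\psi$-weighted continuity \eqref{eq:modcty}, and the integrability \eqref{eq:modint} of $J/\psi$; the continuity of $x_0 \mapsto U^{x_0}_\lambda$ needed to make this pointwise bound rigorous is the content of Section~\ref{sec:cty}. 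After the oscillation reduction, $\|S_\lambda\|_\infty < 1$, so $I - S_\lambda$ is invertible and $u := R_\lambda (I-S_\lambda)^{-1} f$ is a legitimate $C_b^2$ solution of $(\lambda - \mathcal L)u = f$. Feeding such $u$ into the martingale \eqref{eq:mart} and letting $t \to \infty$ yields $\E_x\!\left[\int_0^\infty e^{-\lambda t} f(X_t)\,\dd t\right] = u(x)$ for every solution $\P_x$, so the $\lambda$-resolvents, and hence the one-dimensional distributions, are determined; the Markov property then upgrades this to uniqueness in law, and uniqueness together with the measurability of $x \mapsto \P_x$ gives the strong Markov family by the standard theorem.

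The hard part will be the resolvent regularity of Section~\ref{pfresreg} and its interface with the contraction estimate. Since the symbol grows only like $\log(1+\|\lambda\|^\alpha)$, the operator $(\lambda-\mathcal L_{x_0})^{-1}$ gains essentially no Sobolev smoothing, and the only usable regularization is that encoded in the slowly varying profile $l$ and the divergence \eqref{eq:int} — precisely the feature that distinguishes this setting from \cite{BT} and that forces the modulating function $\psi$ and condition \eqref{eq:modint}. Obtaining second-order control of $U^{x_0}_\lambda f$ strong enough to close the bound $\|S_\lambda\|_\infty<1$, while assuming only the $\psi$-weighted modulus of continuity \eqref{eq:modcty} on $A$, is where the main effort lies.
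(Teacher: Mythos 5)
Your uniqueness scheme has a genuine gap, and it is exactly the one you flag at the end: the diagonal-freezing/Neumann-series argument $(\lambda-\mathcal L)R_\lambda f = f - S_\lambda f$, $\|S_\lambda\|_\infty<1$, requires the frozen resolvent $U^{x_0}_\lambda$ to map the class in which you invert $(I-S_\lambda)$ into functions with quantitative second-increment control. For geometric-stable-type symbols $\log(1+\|\lambda\|^\alpha)$ no such smoothing exists: $U^{x_0}_\lambda$ gains essentially nothing over boundedness, so the small-jump part of $S_\lambda f$, which you propose to bound by a Hessian-type estimate against $\int_{\|h\|<1}\|h\|^2 J(h)\,\dd h$, cannot be closed when $f$ is merely bounded; and if you restrict $f$ to $C_b^2$ (where Lemma~\ref{rescontr}(b) does give $U^{x_0}_\lambda f\in C_b^2$), the resulting bound on $S_\lambda f$ involves $\|f\|_{C_b^2}$ rather than $\|f\|_\infty$, so the Neumann series does not close in any single space. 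You have correctly identified the obstruction but not the mechanism that removes it. The paper's key structural point is that \emph{no smoothing of the frozen resolvent is needed at all}: after localization, Assumption~\ref{xias} together with \eqref{eq:modint} gives $|K(x,h)-K(x_0,h)|\le \xi J(h)/\psi(\|h\|)$, which is integrable over all of $\mathbb R^d\setminus\{0\}$ \emph{including near the origin}. Hence in Proposition~\ref{rpb} the second difference $(R^{x_0}_\lambda f)(x+h)+(R^{x_0}_\lambda f)(x-h)-2(R^{x_0}_\lambda f)(x)$ is bounded crudely by $4\|R^{x_0}_\lambda f\|_\infty\le \tfrac4\lambda\|f\|_\infty$, with no splitting at $\|h\|=1$ and no use of $\int\|h\|^2J(h)\,\dd h$, yielding $\|(\mathcal L-\mathcal M^{x_0})R^{x_0}_\lambda f\|_\infty\le \tfrac{4\xi\mathcal J}{\lambda}\|f\|_\infty$, which is $<\tfrac12\|f\|_\infty$ once $\lambda\ge 8\xi\mathcal J$. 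This is why the hypothesis $\int J/\psi<\infty$ is formulated the way it is: it is precisely the condition that lets the coefficient oscillation kill the singularity of $J$ at $h=0$, replacing the smoothing that is available in the setting of \cite{BT} but absent here.

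There are two further mismatches worth noting. First, the paper does not solve the resolvent equation $(\lambda-\mathcal L)u=f$ at all; uniqueness follows Bass's comparison of the resolvents of two given solutions, $R^\Delta_\lambda g = R^\Delta_\lambda\bigl((\mathcal L-\mathcal M^{x_0})R^{x_0}_\lambda g\bigr)$, giving the contraction $\Theta\le\tfrac12\Theta$ for $\Theta=\sup_{\|g\|_\infty\le1}\|R^\Delta_\lambda g\|_\infty$; this only determines $R^1_\lambda g=R^2_\lambda g$ almost everywhere, and the separate Proposition~\ref{lem:resreg} (continuity of the solutions' resolvents, proved via exit-time estimates and Hölder-type regularity of harmonic functions, not via any property of the frozen Lévy resolvent) is what upgrades this to everywhere equality before identifying one-dimensional and then finite-dimensional distributions. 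Your attribution of Sections~\ref{pfresreg} and \ref{sec:cty} to smoothing estimates for $U^{x_0}_\lambda$ is therefore incorrect: Section~\ref{sec:cty} proves uniform continuity of $x\mapsto\mathcal Lf(x)$ for $f\in C_b^3$, which is the ingredient your existence sketch actually needs (and glosses over) in order for the weak limit of the approximating laws to solve the martingale problem; your truncation-of-small-jumps construction is otherwise a reasonable variant of the paper's discrete-time scheme from \cite{B}. Second, the localization step you invoke is indeed the paper's Proposition~\ref{prop:loc}, and your description of it is accurate, but it reduces to Assumption~\ref{xias}, i.e.\ globally $\psi$-weighted-small oscillation, not merely small oscillation in sup norm; this distinction matters because it is the $\psi$-weighting that feeds the integrable-difference-kernel bound above.
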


In the next section, we motivate the martingale problem and outline the proof of Theorem~\ref{thm}.

\section{Motivation, literature and overview of proof}\label{mli}

We shall divide this section into three parts. In Section~\ref{sec:his} we shall briefly review the literature on the martingale problem. In Section~\ref{sec:mp} we shall discuss the motivation for the main result. Finally, in Section~\ref{sec:mppf} we conclude with an overview of Theorem~\ref{thm}.

\subsection{Literature on the martingale problem}\label{sec:his}

Suppose that $\{X_t\}_{t \geq 0}$ is a Feller process taking values in $\R^d$ with infinitesimal generator $\mathcal{A}$ defined on a domain $D(\mathcal{A})$. Then, by \cite[Proposition 1.7, Chapter 4]{EK}, for every $f \in D(\mathcal{A})$ the process $$
M^f_t = f(X_t) - f(X_0) - \int_0^t \mathcal{A}f(X_s)\dd s
$$
is a martingale with respect to the natural filtration of $\{X_t\}_{t \geq 0}$. Therefore, corresponding to every Feller process $\{X_t\}_{t \geq 0}$ is a family of martingales $\{M^f_t\}_{f \in D(\mathcal{A})}$. The martingale problem asks whether the reverse correspondence holds : for an operator $\mathcal{A}$, does the set of martingales $\{M^f_t\}_{f \in D(\mathcal{A})}$ correspond to a unique process $\{X_t\}_{t \geq 0}$?

The martingale problem formulation was introduced by Stroock and Varadhan\cite{SV1} as a probabilistic framework to accommodate the theory of existence and uniqueness of solutions for a class of backward partial differential equations (see \cite{K} for more details on this connection). They proved that the martingale problem is well-posed for operators of the form \begin{equation}\label{sv}
(\mathcal{A}u)(x) = \frac 12 \sum_{i,j=1}^d a_{ij}(u(x),x) \frac{\partial^2}{\partial x_i \partial x_j} + \sum_{i=1
}^d b_{i}(u(x),x) \frac{\partial}{\partial x_i},
\end{equation}
where $a_{ij} : \mathbb R^d \to \mathbb R$ are continuous and bounded functions for all $1 \leq i,j \leq d$ such that $[a_{ij}(x)]_{1 \leq i,j \leq d}$ is positive definite for all $x \in \mathbb R^d$, and $b_i : \mathbb R^d \to \R$ are bounded and measurable for all $1 \leq i \leq d$ (see \cite[Theorem 6.2]{SV1}).

They use a weak-convergence argument to prove the existence of a solution to the martingale problem. Their technique for proving uniqueness of solutions, for $b_i \equiv 0$, involves an $L^{p}$ estimate for the Green operator associated to the Brownian motion on $\R^d$ that is stable under perturbation of the generator. To adapt this to the case $b_i \not \equiv 0$, they change the underlying measure suitably using a Girsanov transform. We refer the reader to \cite[Section 5]{SV1} for the precise details.

The result of Stroock and Varadhan extended the work of Krylov\cite{KRY} who considered the stochastic differential equation
$$
X_t = x+\int_0^t \sigma(X_t)\dd\xi_t + \int_0^t b(X_t) \dd t 
$$ 
and proved existence and uniqueness of a solution in two dimensions under the assumption that the matrix valued function $\sigma$ is uniformly elliptic while the function $b$ is bounded. However, only existence could be proved in dimensions three or above, and the formulation as a martingale problem addresses the question of uniqueness in this setting for a much larger class of stochastic differential equations. Other applications of the martingale problem include an extension of the Donsker invariance theorem for Markov chains (see \cite[Section 10]{SV1}) and weak uniqueness for differential equations (see \cite[Theorem 5.4, Chapter VI]{Ba}).

\subsection{Motivation behind Theorem~\ref{thm}}\label{sec:mp}

We will now motivate our choice of working with operators of the form \eqref{gsl}. A pure-jump L\'{e}vy process $\{Y_t\}_{t \geq 0}$ on $\mathbb R^d,d \geq 1$ is one that has a L\'{e}vy exponent of the form $$
\phi_{Y}(\lambda) = \int_{\mathbb R^d \setminus \{0\}} (e^{i (\lambda \cdot h)} - 1 - i (\lambda \cdot h)) \dd \Pi(h) \quad \text{for all } \lambda \in \R^d, 
$$
where $\Pi$ is a measure on $\mathbb R^d \setminus \{0\}$ such that $\int_{\R^d \setminus \{0\}} (1 \wedge \|h\|^2) \dd \Pi(h) < \infty$. We call $\Pi(h)$ the L\'{e}vy measure of $\{Y_t\}_{t \geq 0}$. If $\Pi$ is absolutely continuous with respect to the Lebesgue measure, then its density $j : \R^d \setminus \{0\} \to \mathbb R_+$ is called the jump kernel of $\{Y_t\}_{t \geq 0}$. The infinitesimal generator of $\{Y_t\}_{t \geq 0}$ is then given by $$
\mathcal L_Yf(x) = \int_{\R^d \setminus \{0\}} (f(x+h)-f(x)-1_{\|h\|<1} (\nabla f(x)\cdot h)) j(h) \dd h
$$
for all $x \in \mathbb R^d$, $f\in C_b^2(\mathbb R^d)$. Thus, operators of the form \eqref{gsl} are mixtures of the infinitesimal generators of pure-jump L\'{e}vy processes. $\{Y_t\}_{t \geq 0}$ is called symmetric if $j(h) = j(-h)$ for $h \in \R^d \setminus \{0\}$. Symmetry is commonly seen as an assumption in the martingale problem.

The first works on the well-posedness of the martingale problem for processes having a non-trivial jump part were those of Stroock\cite{STR} and Komatsu (\cite{KOM1},\cite{KOM2}). Of these, the assumptions on the L\'{e}vy measure in \cite{KOM2} are phrased in terms of the symbol of the generator as a pseudo-differential operator. Assumptions of this kind will not be imposed in this paper. For other articles which make assumptions on the symbol of the generator and prove well-posedness of the associated martingale problem, we refer the reader to the articles of Hoh(\cite{HOH}), Jacob(\cite[Chapter 4]{JAC}) and Kühn(\cite{FK}). 

On the other hand, \cite{KOM1} and \cite{STR} prove that the martingale problem is well-posed for operators of the form $\mathcal{A} + \mathcal{K}$, where $\mathcal{A}$ is as in \eqref{sv} and $\mathcal{K}$ is of the form \eqref{gsl}. The proofs focus on obtaining the same $L^p$ estimates as derived in \cite{SV1}. Another method of obtaining these estimates that uses more analytic tools is that of Oleinik and Radkevich\cite{OR}, which was also used in the works of Mikulevičius and Pragarauskas (\cite{MP2}, \cite{MP1}).

Bass\cite{B}, proved the well-posedness of the martingale problem for operators of the form \eqref{gsl} such as those with kernel $$K(x,h) = \frac{\xi_{\zeta(x)}}{\|h\|^{1+\zeta(x)}},$$ where $\alpha : \mathbb R \to (0,2)$ is strictly bounded away from $0$ and $2$, the function $\beta(s) = \max_{\|x-y\| \leq s} |\zeta(x) - \zeta(y)|$ satisfies $$\int_0^1 \frac{\beta(s)}{s} \dd s < \infty\text{ and }\lim_{s \to 0} \beta(s) |\ln(s)| = 0,$$ and $\xi_{\zeta(x)}$ is a normalising constant for each $x \in \mathbb R$. While the proof of the existence of a solution coincides with the weak convergence technique used in the proof of \cite[Theorem 4.2]{SV1}, the proof of uniqueness avoids having to deal with probabilistic estimates like those of the Green function and heat kernels. To explain the method, let $\{X_t\}_{t \geq 0}$ be a solution to the martingale problem started at $x_0 \in \mathbb R^d$ corresponding to $K$, and let $\{Y_t\}_{t \geq 0}$ be the pure jump symmetric L\'{e}vy process with jump kernel $j_{x_0} : \mathbb R^d \setminus \{0\} \to \mathbb R_+$ given by $j_{x_0}(h) = K(x_0,h)$. The key idea is to estimate the semigroup corresponding to $\{X_{t}\}_{t \geq 0}$ by the semigroup corresponding to $\{Y_t\}_{t \geq 0}$. A demonstration of this proof technique may also be found in Bass and Perkins\cite{BP}.

Recall the main result of Bass and Tang, \cite[Theorem 1.2]{BT} from the introduction of this paper. While they also used the semigroup comparison method of Bass\cite{B}, the key difference in their estimate lies in the usage of the $L^2(\mathbb R^d)$ norm instead of the $L^{\infty}(\R^d)$ norm for comparing semigroups. The usage of this norm requires additional estimates such as bounds on the Fourier transform of the heat kernel(see \cite[Corollary 2.8]{BT}). 

We note that the geometric stable processes (or its associated operator/jump kernel) do not satisfy any of the assumptions imposed in any of the papers above, insofar as uniqueness of the martingale problem is concerned. The reason is that bounds such as heat kernel bounds, semigroup ultracontractivity or Green function bounds are either not known or do not satisfy the assumptions imposed in previous papers, such as those of \cite{B} and \cite{BT}. This is the main motivation behind this paper.

We complete this section with an example which illustrates the usage of Theorem~\ref{thm}. Our example is motivated from \cite[Corollary 2.3 and (2.6)]{B}.

We require the below lemma before stating our example.
\begin{lemma}\label{lem:exam}
Let $J,\beta  : \R^d \setminus \{0\} \to \mathbb R_+$, $\zeta : \mathbb R^d \to \mathbb R_+$ and $\psi : \R_+ \to \R_+$ be functions which satisfy the following conditions :
\begin{enumerate}[label = \Roman*)]
\item $J$ satisfies Assumption~\ref{ass}(a),(c) with some choice of $\kappa$ and $l$.
\item $c_1 \leq \zeta(x)\leq c_2$ and $c_1 \leq \beta(h) \leq c_2$ for some $c_1,c_2>0$ and every $x\in \R^d,h \in \R^d \setminus \{0\}$.
\item $J(h) = J(-h)$ and $\beta(h) = \beta(-h)$ for every $h \in \R^d \setminus \{0\}$.
\item $\psi : \R_+ \to \R_+$ satisfies Assumption~\ref{ass}(e)(i),(iii) for $J$.
\item $|\ln(\beta(h))|\psi(\|h\|) < c_3$ for some constant $c_3>0$ and every $h \in \R^d \setminus \{0\}$, and $\zeta$ is uniformly continuous.
\end{enumerate}

Let $A(x,h) = \beta(h)^{\zeta(x)}$. Then, the kernel $K(x,h) = A(x,h)J(h)$ satisfies Assumption~\ref{ass} and there is a unique strong Markov family of solutions to the martingale problem for any operator of the form \eqref{gsl} with kernel $K$. 
\end{lemma}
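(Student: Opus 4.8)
The plan is to verify that the given $A(x,h) = \beta(h)^{\zeta(x)}$ and $J$ satisfy each clause of Assumption~\ref{ass}, after which Theorem~\ref{thm} applies directly. Conditions (a), (b) and (c) of the assumption are essentially immediate: (a) and (c) for $J$ hold by hypothesis (I), and the symmetry requirement (b), namely $K(x,h) = K(x,-h)$, follows from the symmetry of both $\beta$ and $J$ asserted in (III), since $A(x,-h) = \beta(-h)^{\zeta(x)} = \beta(h)^{\zeta(x)} = A(x,h)$. The integrability condition (e)(iii) transfers verbatim from hypothesis (IV).

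Next I would check the boundedness condition (d). Since $c_1 \leq \beta(h) \leq c_2$ and $c_1 \leq \zeta(x) \leq c_2$, the quantity $\beta(h)^{\zeta(x)} = \exp(\zeta(x)\ln\beta(h))$ is squeezed between explicit positive constants depending only on $c_1, c_2$; concretely one bounds the exponent $\zeta(x)\ln\beta(h)$ above and below using the ranges of $\zeta$ and of $\ln\beta$, which are finite because $\beta$ is bounded away from $0$ and $\infty$. This yields uniform positive lower and upper bounds for $A$, giving (d).

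The crux of the proof is the modulus-of-continuity condition (e)(ii), namely that for all $b,\epsilon>0$ there is $\delta>0$ with $\|y-x\|<\delta \implies \sup_{\|h\|<b}|A(x,h)-A(y,h)|\psi(\|h\|) < \epsilon$. Writing $A(x,h)-A(y,h) = \beta(h)^{\zeta(x)} - \beta(h)^{\zeta(y)}$, I would factor this through the mean value theorem applied to the map $t \mapsto \beta(h)^t = e^{t\ln\beta(h)}$, whose derivative is $\ln(\beta(h))\,\beta(h)^t$. This produces a bound of the form $|A(x,h)-A(y,h)| \leq |\zeta(x)-\zeta(y)|\,|\ln\beta(h)|\,\beta(h)^{\xi}$ for some intermediate exponent $\xi \in [c_1,c_2]$, so that $\beta(h)^{\xi}$ is bounded by a constant as in the previous step. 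Multiplying by $\psi(\|h\|)$ and invoking hypothesis (V), the factor $|\ln\beta(h)|\psi(\|h\|)$ is uniformly bounded by $c_3$; what remains is $|\zeta(x)-\zeta(y)|$ times constants, which is made smaller than $\epsilon$ by choosing $\delta$ from the uniform continuity of $\zeta$, independently of $h$ and hence uniformly over $\|h\|<b$.

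The main obstacle is handling the weight $\psi$ correctly in the continuity estimate: the difficulty in Assumption~\ref{ass}(e)(ii) is that $\psi(\|h\|)$ may blow up as $\|h\|\to 0$, so a naive bound on $|A(x,h)-A(y,h)|$ alone is insufficient and one genuinely needs the product $|\ln\beta(h)|\psi(\|h\|)$ to stay bounded. This is exactly why hypothesis (V) is stated as a bound on $|\ln(\beta(h))|\psi(\|h\|)$ rather than on $\beta$ alone, and the mean-value-theorem factorization above is designed precisely to expose $|\ln\beta(h)|$ as the factor that $\psi$ must tame. Once this is arranged, all clauses of Assumption~\ref{ass} hold and the conclusion follows from Theorem~\ref{thm}.
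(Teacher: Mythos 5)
Your proposal is correct and follows essentially the same route as the paper's proof: the routine clauses of Assumption~\ref{ass} are read off from hypotheses I)--IV), condition (d) from the uniform bounds on $\zeta$ and $\beta$, and the key clause (e)(ii) is obtained by the mean value theorem applied to $t \mapsto \beta(h)^{t}$, with hypothesis V) taming the factor $|\ln\beta(h)|\psi(\|h\|)$ and the uniform continuity of $\zeta$ supplying $\delta$, after which Theorem~\ref{thm} concludes. No gaps.
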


\begin{proof}
Assumption~\ref{ass}(a),(c) are satisfied from I), while Assumption~\ref{ass}(d) follows from the definition of $A$ and II). Assumption~\ref{ass}(b) follows by III), while Assumption~\ref{ass}(e)(i),(iii) follow from IV).
 
In order to verify Assumption~\ref{ass}(e)(ii), let $b,\epsilon>0$ be arbitrary. Suppose that $\|h\|<b$ and $x,y \in \R^d$. We have \begin{align*}
|A(x,h) - A(y,h)|\psi(\|h\|) &= |\beta(h)^{\zeta(x)} - \beta(h)^{\zeta(y)}|\psi(\|h\|) \\ & \leq |\zeta(x)-\zeta(y)| \max\{\beta(h)^{\zeta(x)},\beta(h)^{\zeta(y)}\} |\ln(\beta(h))| \psi(\|h\|), 
\end{align*}
where we used the mean value inequality and the monotone nature of $z \to \beta(h)^{z}$ above. Note that $\beta(h)^{\zeta(x)} = A(x,h)$ and $\beta(h)^{\zeta(y)} = A(y,h)$ by definition. Combining Assumption~\ref{ass}(d) with  V), $$
|\zeta(x)-\zeta(y)| \max\{\beta(h)^{\zeta(x)},\beta(h)^{\zeta(y)}\} |\ln(\beta(h))| \psi(\|h\|) \leq C'|\zeta(x)-\zeta(y)|
$$
for some constant $C'>0$ independent of $x,y$ and $h$. Combining the above inequalities and taking the supremum over $\|h\|<b$, \begin{equation}\label{eq:unif}
\sup_{\|h\|<b} |A(x,h) - A(y,h)|\psi(\|h\|) \leq C'|\zeta(x) - \zeta(y)|
\end{equation}
for every $b>0$. 

Let $\delta>0$ be such that $\|x-y\| < \delta \implies |\zeta(x) - \zeta(y)| < \frac{\epsilon}{C'}$. Then, $$
\|x-y\| < \delta \implies |\zeta(x) - \zeta(y)| < \frac{\epsilon}{C'} \implies \sup_{\|h\|<b} |A(x,h) - A(y,h)|\psi(\|h\|) < \epsilon
$$ 
by \eqref{eq:unif}. This shows that Assumption~\ref{ass}(e)(ii), and hence Assumption~\ref{ass} is satisfied by $K(x,h)$. The result now follows by applying Theorem~\ref{thm}. 
\end{proof}
\begin{example}
Let $J_{\alpha}$ be the jump kernel corresponding to the geometric stable process of index $\alpha \in (0,2]$ (See Section~\ref{mr}), which satisfies $J_{\alpha}(h)  = J_{\alpha}(-h)$ and Assumption~\ref{ass}(c) with $l \equiv 1$. Let $\zeta : \R^d \to \R_+$ be any uniformly continuous function which is bounded and bounded away from $0$ (for example, $\zeta(x) = 1+e^{-\|x\|}$) and let $\beta(h) = e^{1 \wedge \|h\|^{\epsilon}}$ for some fixed $\epsilon>0$. Set $\psi(s) = \frac{1}{s^{\epsilon} \wedge 1}$ for any $\epsilon\in (0,1)$.

  We note that all conditions I),II),III) and V) of Lemma~\ref{lem:exam} are satisfied by the above definitions, and only need to verify condition IV). For this, note that $\psi(s) = 1$ for $s \geq 1$. Thus, it is sufficient to prove that \begin{equation}\label{eq:part12}
\int_{\R^d \setminus \{0\}} \frac{J_{\alpha}(h)}{\psi(\|h\|)}\dd h < \infty.
\end{equation}
Clearly $\int_{\|h\|\geq 1} J_{\alpha}(h) \dd h < K_0$ since $J_{\alpha}$ satisfies Assumption~\ref{ass}(a). Now, by Assumption~\ref{ass}(c) and a change of variable, 
\begin{equation}\label{eq:part2}
\int_{0<\|h\|<1} \|h\|^{\epsilon}J_{\alpha}(h) \dd h \leq \kappa \int_{0<\|h\|<1} \|h\|^{\epsilon-d} \dd h = C\int_{0}^1 r^{\epsilon-1} \dd r = \frac{C}{\epsilon} < \infty
\end{equation}
since $\epsilon>0$. Combining this with \eqref{eq:part12} shows that $\psi$ satisfies condition V) of Lemma~\ref{lem:exam}. Thus, the kernel 
$$
K(x,h) = e^{\zeta(x)(\|h^{\epsilon}\| \wedge 1)}J_{\alpha}(h)
$$
satisfies Lemma~\ref{lem:exam} for any $\epsilon>0$. By an application of the lemma, there is a unique strong Markov family of solutions to the martingale problem for the operator associated to it via \eqref{gsl}. 

\end{example}

\section{Overview of the proof of Theorem~\ref{thm}}\label{sec:mppf}

We will now provide an overview of the proof of Theorem~\ref{thm}, beginning with comments on Assumption~\ref{ass}. In keeping with the assumptions in \cite{BT}, we ensure that the kernel $K$ is of the form $K(x,h) = A(x,h)J(h)$, where $A$ satisfies the boundedness assumption Assumption~\ref{ass}(d) which is the analogue of \eqref{eq:bddness}, and the continuity assumption Assumption~\ref{ass}(e) which is the analogue of \eqref{eq:cty}. 

Recall the definition of a jump kernel and the formula of the infinitesimal generator of a L\'{e}vy process from the previous section. Assumptions~\ref{ass}(a),(b) and (c) ensure that we restrict our attention to perturbing the generators of symmetric L\'{e}vy processes that possess a jump kernel $J$ which is slowly varying at $0$. This class of processes has not been covered previously in the history of the martingale problem, hence we choose to focus on it.

We shall now discuss the proof of Theorem~\ref{thm}. Suppose that $\mathcal{L}$ is of the form \eqref{gsl} such that its kernel $K$ satisfies Assumption~\ref{ass}. Our proof of the existence of a solution requires a key proposition (Proposition~\ref{prop:cty}), namely that $\mathcal{L}f$ is uniformly continuous for each $f \in C_b^3(\mathbb R^d)$. The proof of the existence of the solution now follows by a suitable modification of the proof of \cite[Theorem 2.1]{B}, using a weak convergence argument. A selection argument then gives the existence of a strong Markov solution to the problem.

The proof of uniqueness in Theorem~\ref{thm} requires three key propositions. The first of these, Proposition~\ref{lem:resreg} requires the definition of the resolvent. Given a strong Markov process $\{Y_t\}_{t \geq 0}$, its resolvent is defined by \begin{equation}\label{res}
(R_{\lambda}^Yg)(x) = \mathbb E_x \int_{0}^{\infty} e^{-\lambda t}g(Y_t)dt,
\end{equation}
for every $g \in L^{\infty}(\mathbb R^d)$, $x\in \R^d$ and $\lambda\geq 0$. The first key proposition, Proposition~\ref{lem:resreg} states that for any $\lambda>0$, the resolvent $R_{\lambda}g$ of a solution of the martingale problem is continuous on $\R^d$ for any $g \in L^{\infty}(\R^d)$. The proof relies on exit time estimates and a result on the regularity of harmonic functions with respect to a solution of the martingale problem.

The second proposition, Proposition~\ref{rpb} is a resolvent perturbation bound. More precisely, let $\{X_t\}_{t \geq 0}$ be a solution to the martingale problem started at $x_0 \in \mathbb R^d$ corresponding to $K$, and let $\{Y_t\}_{t \geq 0}$ be the pure jump symmetric L\'{e}vy process with jump kernel $j_{x_0} : \mathbb R^d \setminus \{0\} \to \mathbb R_+$ given by $j_{x_0}(h) = K(x_0,h)$. Let $R_{\lambda}^{x_0}$ and $\mathcal M^{x_0}$ be the resolvent and infinitesimal generator of $\{Y_t\}_{t \geq 0}$ respectively. Under an additional assumption on $K$, Assumption~\ref{xias} which is stronger than Assumption~\ref{ass}(e), Proposition~\ref{rpb} shows a bound of the form $$
\|(\mathcal L - \mathcal M^{x_0})(R_{\lambda}^{x_0}g)\|_{\infty} \leq C\|g\|_{\infty}
$$ 
where it can be ensured under some conditions that $0<C<1$. This proposition provides an estimate of how well $\mathcal M^{x_0}$ approximates $\mathcal L$.

Finally, the last proposition Proposition~\ref{prop:loc} is a localization argument, which relaxes Assumption~\ref{xias} to Assumption~\ref{ass}(e). It is a technical result that uses the structure of $D([0,\infty);\mathbb R^d)$. Informally, given a strong Markov solution to the martingale problem $\{X_t\}_{t \geq 0}$ with filtration $\{\mathcal F_{t}\}_{t \geq 0}$, define the stopping times  $\tau_i : \Omega \to \mathbb R_+$ for $i\geq 0$ by $$
\tau_0= 0, \tau_i = \inf\{t \geq \tau_{i-1} : \|X_{t} - X_{\tau_{i-1}}\| \geq \eta\} \text{ for } i \geq 1.
$$
The result shows that $\tau_i \to +\infty$ a.s., and then inductively shows that any solution to the martingale problem is uniquely determined on the stopped filtration $\mathcal F_i, i \geq 1$. The result follows after showing that $\mathcal F_{\infty}$ is generated by $\cup_{i=1}^{\infty} \mathcal F_i$.

Given these three propositions, the key idea of the proof is as follows. If $\{\mathbb P^i_x\}_{x \in \mathbb R^d}$ are two strong Markov families of solutions to the martingale problem, then we consider the resolvents under these measures $$
R^i_{\lambda} f(x) = \mathbb E^i_x \int_0^{\infty} e^{-\lambda t}f(X_t)dt, \quad i=1,2
$$
for $f\in L^{\infty}(\mathbb R^d)$ and $x\in \R^d$. Assuming Assumption~\ref{xias}, we show using the resolvent continuity and perturbation bound that for large enough $\lambda>0$, $R_{\lambda}^1 g = R_{\lambda}^2 g$ for all $g \in L^{\infty}(\R^d)$ such that $\|g\|_{L^{\infty}}(\mathbb R^d) \leq 1$, whence uniqueness follows under the assumption, which is subsequently removed by the localization argument.


\section{Proof of Theorem~\ref{thm}}\label{pf}

In this section, we will prove Theorem~\ref{thm} assuming some key propositions that will be proved in subsequent sections. We will prove that a solution to the martingale problem exists in Section~\ref{sec:exist}. Finally, we will prove that the solution is unique in Section~\ref{sec:uniq}. Throughout this section, let $\mathcal{L}$ be an operator as in \eqref{gsl} with kernel $K$ which satisfies Assumption~\ref{ass}.

\subsection{Proof of the Existence in Theorem~\ref{thm}}\label{sec:exist}

We require the following key proposition to prove the existence of a solution.

\begin{proposition}\label{prop:cty}
For all $f \in C_b^3(\mathbb R^d)$, $\mathcal Lf : \mathbb R^d \to \mathbb R$ is uniformly continuous.
\end{proposition}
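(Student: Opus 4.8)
The plan is to work with the symmetric representation of $\mathcal{L}$ in \eqref{gsl} and estimate the second difference directly. Writing $D_h f(x) = f(x+h) + f(x-h) - 2f(x)$, we have $(\mathcal{L}f)(x) = \frac12 \int_{\mathbb{R}^d \setminus \{0\}} D_h f(x)\, A(x,h) J(h)\, dh$ by \eqref{eq:dec}. Since $f \in C_b^3(\mathbb{R}^d)$, a second-order Taylor expansion with integral remainder (in which the odd terms cancel) yields the two-sided bound $|D_h f(x)| \le C(1 \wedge \|h\|^2)$ uniformly in $x$, where $C$ depends only on $\|f\|_\infty$ and $\|H_f\|_\infty$. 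This is precisely the decay matched to the integrability condition Assumption~\ref{ass}(a), and it is the reason the symmetric form is convenient: no separate treatment of the gradient correction term is needed.

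Given $x, y \in \mathbb{R}^d$, I would write $(\mathcal{L}f)(x) - (\mathcal{L}f)(y) = I_1 + I_2$, where
\begin{align*}
I_1 &= \tfrac12 \int D_h f(x)\,[A(x,h) - A(y,h)]\, J(h)\, dh, \\
I_2 &= \tfrac12 \int [D_h f(x) - D_h f(y)]\, A(y,h)\, J(h)\, dh.
\end{align*}
The term $I_2$ is the easy one. Using that $f \in C_b^3(\mathbb{R}^d)$ makes $H_f$ Lipschitz, the integral-remainder formula for $D_h f$ gives $|D_h f(x) - D_h f(y)| \le C\|x-y\|\,\|h\|^2$ for small $\|h\|$, while for large $\|h\|$ the mean value inequality on each of the four terms gives $|D_h f(x) - D_h f(y)| \le C\|x-y\|$; together these yield $|D_h f(x) - D_h f(y)| \le C\|x-y\|(1\wedge\|h\|^2)$ with $C = C(\|f\|_{C_b^3(\mathbb{R}^d)})$. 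Combined with Assumption~\ref{ass}(d) ($A \le c_2$) and (a), this gives $|I_2| \le \tfrac12 C c_2 K_0 \|x-y\|$, so $I_2$ is in fact Lipschitz in $\|x-y\|$, uniformly in $x$ and $y$.

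The main obstacle is $I_1$, because the continuity of $A$ in its first variable is controlled only through the weight $\psi$ by Assumption~\ref{ass}(e)(ii), and $\psi(\|h\|)$ may blow up as $\|h\| \to 0$. The resolution is that the second-difference bound $|D_h f(x)| \le C(1\wedge\|h\|^2)$ provides enough decay near the origin to absorb this weight. Concretely, I would split $I_1$ over $\|h\| < b$ and $\|h\| \ge b$. On the tail, using $|D_h f(x)| \le 4\|f\|_\infty$ and $|A(x,h)-A(y,h)| \le 2c_2$ gives a contribution bounded by $4 c_2 \|f\|_\infty \int_{\|h\|\ge b} J(h)\, dh$, which tends to $0$ as $b \to \infty$ since $\int_{\|h\|\ge 1} J(h)\,dh \le K_0 < \infty$ by (a). On the inner region, Assumption~\ref{ass}(e)(ii) gives $|A(x,h)-A(y,h)| \le \epsilon'/\psi(\|h\|)$ whenever $\|x-y\| < \delta$, so that this part is bounded by $\tfrac{C\epsilon'}{2}\int (1\wedge\|h\|^2)\frac{J(h)}{\psi(\|h\|)}\,dh \le \tfrac{C\epsilon'}{2}\,\mathcal{J}$, which is finite by the key integrability condition Assumption~\ref{ass}(e)(iii). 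Thus, given $\epsilon > 0$, I would first fix $b$ large enough to make the tail smaller than $\epsilon/3$ (depending only on $\epsilon$ and $f$), then invoke (e)(ii) with this $b$ to obtain a $\delta$ — uniform in $x, y$ — controlling the inner part, and finally shrink $\delta$ further to handle $I_2$. Since all three bounds depend on $x, y$ only through $\|x-y\|$, this establishes the uniform continuity of $\mathcal{L}f$.
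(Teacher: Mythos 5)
Your proof is correct and follows essentially the same approach as the paper's: a triangle-inequality decomposition separating the term where $f$'s arguments differ (handled by a $C_b^3$ Lipschitz bound together with Assumptions~\ref{ass}(a),(d)) from the term where $A$'s first variable differs (handled by a tail cutoff via Assumption~\ref{ass}(a) and, on the inner region, Assumptions~\ref{ass}(e)(ii),(iii)), with the same quantifier bookkeeping. The only differences are cosmetic: you work with the symmetric second-difference form of \eqref{gsl}, avoiding the gradient-correction indicator, and you apply the large-$\|h\|$ cutoff only inside the $A$-difference term rather than to the whole integrand.
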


Proposition~\ref{prop:cty} will be proved in Section~\ref{sec:cty}. We can now prove the existence as in Theorem~\ref{thm}.

\begin{proof}[Proof of the existence in Theorem~\ref{thm}]

Fix $x_0 \in \mathbb R^d$. Recall $\Omega = D([0,\infty); \mathbb R^d)$ from Section~\ref{mr}. For all $n \geq 1$, let $\P_n$ be a probability measure on $\Omega$ such that for all $k < n 2^n$ and for all $f \in C_b^2(\mathbb R^d)$,
\begin{equation}\label{eq:partmart}
M^{f,n}_t = f\left(X_{t \wedge \frac{k+1}{2^n}}\right)- f\left(X_{t \wedge \frac{k}{2^n}}\right) - \mathcal Lf(X_{\frac{k}{2^n}})\left(t \wedge \frac{k+1}{2^n} - t \wedge \frac{k}{2^n}\right)
\end{equation}
is a $\P_n$-martingale, and $\P_n(X_0=x_0)=1$. As described in the proof of \cite[Theorem 2.1]{B}, this martingale can be constructed using a stochastic differential equation driven by a Poisson point process. 

Fix $x \in \R^d$ and $f \in C_b^2(\R^d)$. Note that for any $h \in \R^d, \|h\|<1$ we have $$
|f(x+h)-f(x) - \nabla f(x)\cdot h| \leq \|H_f\|_{\infty}\|h\|^2.
$$
On the other hand, for $\|h\| \geq 1$ we have $$
|f(x+h)-f(x)| \leq 2\|f\|_{\infty}.
$$
Combining the two inequalities above,
\begin{align*}
|f(x+h)-f(x) - 1_{\|h\|<1} (\nabla f(x)\cdot h)|& \leq (2\|f\|_{\infty} \wedge \|H_f\|_{\infty}\|h\|^2)\\& \leq 2(1 \wedge \|h\|^2) (\|f\|_{\infty} \vee \|H_f\|_{\infty})
\end{align*}
for all $h \in \R^d \setminus \{0\}$. Thus, by \eqref{gsl} and Assumption~\ref{ass}(c), for all $x \in \mathbb R^d$ we have
\begin{align}
\|\mathcal Lf(x)\|& \leq 2(\|f\|_{\infty} \vee \|H_f\|_{\infty})\int_{\R^d \setminus \{0\}} (1 \wedge \|h\|^2)K(x,h)\dd h \nonumber \\ &\leq C_f\int_{\R^d \setminus \{0\}} (1 \wedge \|h\|^2)J(h)\dd h \leq 2C_fK_0  \label{eq:Lbdd} 
\end{align}
where we used Assumption~\ref{ass}(a) in the last inequality. It follows from the definition of $\P_n$ that $f(X_t) - f(X_0) - 2C_fK_0t$ is a $\P_n$- supermartingale for all $n \geq 1$. Thus, $\{\mathbb P_n\}_{n \geq 1}$ satisfy the hypothesis of \cite[Proposition 3.2]{B} since $C_f$ depends only on $\|f\|_{\infty}$ and $\|H_f\|_{\infty}$. It follows that the sequence of probability measures $\{\P_n\}_{n \geq 1}$ are tight on $D([0,t_0]; \mathbb R^d)$ for all $t_0 >0$.  

Letting $\mathbb P_{x_0}$ be a subsequential limit of $\mathbb P_n$, for arbitrary fixed $f \in C_b^3(\mathbb R^d)$ we follow the argument of \cite[Theorem 2.1]{B} from the third paragraph onwards, and conclude that \cite[(3.3)]{B} holds for $f \in C_b^3(\mathbb R^d)$. That is, $$
M^f_t = f(X_t) - f(x_0) - \int_{0}^t \mathcal Lf(X_s) \dd s
$$
is a $\mathbb{P}_{x_0}$-martingale for all $f \in C_b^3(\mathbb R^d)$. Now, given $f \in C_b^2(\mathbb R^d)$ arbitrary, let $f_n \in C_b^3(\mathbb R^d)$ be such that $f_n \to f$ in $C_b^2(\mathbb R^d)$. 

We claim that $M^{f_n}_t \to M^f_t$ in $L^1(\mathbb P_{x_0})$ for each fixed $t >0$. Since $f_n \to f$ in $C_b^2(\mathbb R^d)$ we have
$$
f_n(x+h) - f_n(x) - 1_{\|h\|<1}(\nabla f_n(x)\cdot h) \to f(x+h) - f(x) - 1_{\|h\|<1}(\nabla f(x)\cdot h) 
$$
for all $x \in \mathbb R^d$ and $h \in \mathbb R^d \setminus \{0\}$. By \eqref{gsl}, \eqref{eq:Lbdd} and the dominated convergence theorem, it now follows that $\mathcal Lf_n(x) \to \mathcal Lf(x)$ for all $x \in \mathbb R^d$. Since $f_n \to f$ pointwise, it follows that $M^{f_n}_t \to M^f_t$ pointwise as random variables on $\Omega$. 

By \eqref{eq:partmart} and \eqref{eq:Lbdd}, $$|M^{f_n}_t| \leq \|f_n\|_{C_b^2(\mathbb R^d)}(2+2C_fK_0t)$$ for all $n \geq 1$. Note that $\|f_n\|_{C_b^2(\mathbb R^d)}$ is a convergent, hence bounded sequence. It follows by the bounded convergence theorem that $M^{f_n}_t \to M^f_t$ in $L^1(\mathbb P_{x_0})$.

Thus, $\{M^{f}_t\}_{t \geq 0}$ is the $L^1(\mathbb P_{x_0})$-pointwise limit of $\{M^{f_n}_t\}_{t \geq 0}$. Since $\{M^{f_n}_t\}_{t \geq 0}$ are martingales and $\{\mathcal F_t\}_{t \geq 0}$ is a complete filtration, it follows that $\{M^f_t\}_{t \geq 0}$ is also a martingale. We have shown that $\mathbb{P}_{x_0}$ is a solution to the martingale problem for $\mathcal{L}$ started at $x_0$ for each $x_0 \in \mathbb R^d$.

Suppose that $\mathcal{G}_{x_0}$ is the set of all solutions to the martingale problem for $\mathcal{L}$ started at $x_0$. Then, by the arguments in \cite[Section 4]{B}, it can be shown that $\mathcal{G}_{x_0}$ is compact in the space of probability measures $\mathbb P$ on $D([0,\infty);\R^d)$ such that $\mathbb P(X_0=x_0) = 1$. Hence, by the proofs in \cite[Chapter 14]{SV}, we obtain the existence of $\mathbb P_{x_0} \in \mathcal{G}_{x_0}$ for each $x_0 \in \R^d$ such that $\{\P_x\}_{x \in \R^d}$ is a strong Markov family of solutions to the martingale problem (see the paragraph below \cite[Remark 4.2]{BT} for a similar argument).
\end{proof}

\subsection{Proof of Uniqueness in Theorem~\ref{thm}}\label{sec:uniq}

By the previous section, we assume the existence of a strong Markov family of solutions to the martingale problem $\{\P_x\}_{x \in \mathbb R^d}$ with associated coordinate process $\{X_t\}_{t \geq 0}$ as described in Section~\ref{mr}. 

Recall the resolvent of a strong Markov process from \eqref{res}. The following lemma states some facts about the resolvent. We include a proof of it after the proof of uniqueness in Theorem~\ref{thm}.

\begin{lemma}[Resolvent properties]\label{rescontr} Let $\{Y_t\}_{t \geq 0}$ be a strong Markov process with resolvent $R_{\lambda}^{Y}$. Fix $\lambda > 0$.
\begin{enumerate}[label = (\alph*)]
\item For any $g \in L^{\infty}(\R^d)$, $(R^Y_{\lambda}g) \in L^{\infty}(\R^d)$ and 
$$
\|(R^Y_{\lambda} g)\|_{\infty} \leq \frac{1}{\lambda} \|g\|_{\infty}.
$$
\item Suppose that $\{Y_t\}_{t \geq 0}$ is a Lévy process. Then, for any $\lambda>0$ and $g \in C_b^2(\R^d)$, $R^{Y}_{\lambda} g \in C_b^2(\R^d)$.
\end{enumerate}
\end{lemma}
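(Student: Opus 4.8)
Part (a) is a direct estimate on the resolvent. For $g \in L^\infty(\R^d)$, I would bound the defining integral in \eqref{res} directly: since $|g(Y_t)| \leq \|g\|_\infty$ pointwise,
\begin{equation}\label{eq:plan-resbdd}
|(R^Y_\lambda g)(x)| \leq \E_x \int_0^\infty e^{-\lambda t} |g(Y_t)|\, \dd t \leq \|g\|_\infty \int_0^\infty e^{-\lambda t}\, \dd t = \frac{1}{\lambda}\|g\|_\infty,
\end{equation}
uniformly in $x$. Taking the supremum over $x$ gives $(R^Y_\lambda g) \in L^\infty(\R^d)$ with the stated bound. The only point requiring a word of care is the measurability and finiteness of the integral, which follows from Tonelli's theorem applied to the nonnegative integrand $e^{-\lambda t}|g(Y_t)|$, using that $\{Y_t\}$ has measurable (indeed càdlàg) paths.

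Part (b) is the genuine content. The natural route is to pass to the Fourier/semigroup side and use that $\{Y_t\}$ is a Lévy process, so its transition semigroup $P_t g(x) = \E_x[g(Y_t)] = \E_0[g(x+Y_t)]$ is a convolution semigroup, and $R^Y_\lambda g = \int_0^\infty e^{-\lambda t} P_t g \, \dd t$. First I would establish that $P_t$ preserves $C_b^2$ and commutes with differentiation: because $P_t g(x) = \E_0[g(x+Y_t)]$ is a convolution, one differentiates under the expectation to get $\partial^\alpha (P_t g) = P_t(\partial^\alpha g)$ for $|\alpha| \leq 2$, with $\|\partial^\alpha (P_t g)\|_\infty \leq \|\partial^\alpha g\|_\infty$; differentiation under the integral is justified since the derivatives $\partial^\alpha g$ are bounded and continuous, giving a dominating bound uniform in $t$. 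Integrating in $t$ against $e^{-\lambda t}$ then formally yields $\partial^\alpha(R^Y_\lambda g) = R^Y_\lambda(\partial^\alpha g)$, and these are bounded by $\lambda^{-1}\|\partial^\alpha g\|_\infty$ via part (a), which already shows all the candidate derivatives are bounded.

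The step I expect to be the main obstacle is rigorously justifying the interchange of $\partial^\alpha$ with the $\dd t$-integral defining $R^Y_\lambda$ and, crucially, the \emph{continuity} of the resulting derivatives (mere boundedness is not enough for the $C_b^2$ conclusion). I would handle this by a dominated-convergence argument: for each fixed $t$, $x \mapsto \partial^\alpha (P_t g)(x) = \E_0[\partial^\alpha g(x+Y_t)]$ is continuous in $x$ by dominated convergence (since $\partial^\alpha g$ is bounded continuous and $x+Y_t \to x'+Y_t$ pointwise), and the integrand $e^{-\lambda t}\partial^\alpha(P_t g)(x)$ is dominated uniformly by $e^{-\lambda t}\|\partial^\alpha g\|_\infty$, which is integrable on $[0,\infty)$. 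Hence $\partial^\alpha(R^Y_\lambda g)(x) = \int_0^\infty e^{-\lambda t}\, \partial^\alpha(P_t g)(x)\,\dd t$ is continuous in $x$ by a further application of dominated convergence in $x$, and the differentiation under both the expectation and the $\dd t$-integral is legitimate because the relevant difference quotients are bounded by the sup-norms of the next-order derivatives (available up to order $2$) via the mean value inequality. Assembling these gives $R^Y_\lambda g \in C_b^2(\R^d)$. One subtlety to flag is that the semigroup $\{P_t\}$ of a general Lévy process need not be strongly continuous on all of $C_b^2$, but this is not needed here: only pointwise continuity of each $P_t g$ in the spatial variable, plus the uniform sup-norm domination, enters the argument.
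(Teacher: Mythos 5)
Your proposal is correct and takes essentially the same route as the paper: part (a) is the identical direct estimate, and part (b) rests on the same key fact---translation invariance of the L\'{e}vy process (your convolution identity $P_t g(x) = \mathbb E_0[g(x+Y_t)]$ is exactly the paper's statement that $\{Y_t+h\}$ under $\mathbb P_x$ has the law of $\{Y_t\}$ under $\mathbb P_{x+h}$)---combined with mean-value bounds on difference quotients and dominated convergence to obtain $\partial^{\alpha}(R^Y_{\lambda} g) = R^Y_{\lambda}(\partial^{\alpha} g)$, which is then bounded via part (a). Your detour through the semigroup $P_t$ and your extra care about continuity of the derivatives are harmless refinements rather than a different argument (note the paper's definition of $C_b^2(\mathbb R^d)$ only requires existence and boundedness of the partial derivatives up to order two).
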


We shall now state our first key proposition, the continuity of the resolvent. 

\begin{proposition}[Continuity of resolvent] \label{lem:resreg} Let $R_{\lambda}$ be the resolvent of $\{X_t\}_{t \geq 0}$. For any $g \in L^{\infty}(\R^d)$ and $\lambda > 0$, $R_{\lambda}g$ is a continuous function.
\end{proposition}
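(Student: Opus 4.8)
The plan is to prove continuity of $R_\lambda g$ by combining two ingredients: a regularity statement for harmonic functions relative to a solution of the martingale problem, and quantitative control of exit times from small balls. The strategy mirrors the approach in Bass \cite{B} and Bass–Tang \cite{BT}, where the resolvent is shown to be continuous by decomposing $R_\lambda g(x)$ near a fixed point $x_0$ into a contribution from the process before it exits a small ball $B(x_0,r)$ and a contribution afterward. The key point is that the slowly varying kernel $J$ still yields the exit-time and hitting-probability estimates one needs, even though the classical stable heat-kernel bounds are unavailable.

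First I would fix $x_0 \in \mathbb{R}^d$ and $g \in L^\infty(\mathbb{R}^d)$ with $\|g\|_\infty \le 1$. For a small radius $r>0$, let $\tau_r = \inf\{t \ge 0 : \|X_t - x_0\| \ge r\}$ be the first exit time from $B(x_0,r)$. Using the strong Markov property and the definition \eqref{res} of the resolvent, I would write, for $x$ near $x_0$,
\begin{equation*}
R_\lambda g(x) = \mathbb{E}_x \int_0^{\tau_r} e^{-\lambda t} g(X_t)\, \mathrm{d}t + \mathbb{E}_x\!\left[ e^{-\lambda \tau_r} R_\lambda g(X_{\tau_r}) \right].
\end{equation*}
The first term on the right is bounded in absolute value by $\mathbb{E}_x[\tau_r]$ (since $\|g\|_\infty \le 1$), so it is small uniformly in $x$ once I have an upper bound of the form $\mathbb{E}_x[\tau_r] \le \omega(r)$ with $\omega(r)\to 0$ as $r \to 0$. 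Establishing such an exit-time bound is the crucial analytic step: by applying the martingale property \eqref{eq:mart} to a suitable test function $f \in C_b^2$ that equals $\|x - x_0\|^2$ on $B(x_0,r)$ and is constant far away, and using the bound \eqref{eq:Lbdd} together with Assumption~\ref{ass}, one controls $\mathbb{E}_x[\tau_r]$ in terms of the rate at which the kernel concentrates mass near the origin. Because $J$ is only slowly varying (not genuinely $\alpha$-stable), the correct rate here involves $l$ and the integrability condition \eqref{eq:int}; getting the right modulus $\omega(r)$ is where the geometric-stable structure really enters.

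For the second term I would argue that $X_{\tau_r}$ lies close to $x_0$ (within distance $r$ plus a controllable overshoot from the jump across the sphere), so that $R_\lambda g(X_{\tau_r})$ is close to $R_\lambda g(x_0)$ \emph{provided} we already know regularity of $R_\lambda g$ — this is circular unless handled carefully. The standard way around this is to run the argument for both a point $x$ near $x_0$ and for $x_0$ itself, take the difference
\begin{equation*}
|R_\lambda g(x) - R_\lambda g(x_0)| \le \left|\mathbb{E}_x \int_0^{\tau_r} e^{-\lambda t} g\, \mathrm{d}t\right| + \left|\mathbb{E}_{x_0}\int_0^{\tau_r} e^{-\lambda t} g\, \mathrm{d}t\right| + \left|\mathbb{E}_x[e^{-\lambda\tau_r} R_\lambda g(X_{\tau_r})] - \mathbb{E}_{x_0}[e^{-\lambda\tau_r} R_\lambda g(X_{\tau_r})]\right|,
\end{equation*}
and to bound the last term by exploiting that the exit distributions from $B(x_0,r)$ under $\mathbb{P}_x$ and $\mathbb{P}_{x_0}$ are close when $\|x-x_0\|$ is small relative to $r$ (a hitting-probability/harmonic-regularity estimate). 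Concretely, I expect to invoke a Hölder-type or Krylov–Safonov-style regularity result for $\mathbb{P}$-harmonic functions — the regularity of harmonic functions with respect to a solution of the martingale problem that the overview explicitly mentions Proposition~\ref{lem:resreg} relies on.

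**The main obstacle** I anticipate is precisely the exit-time and harmonic-regularity estimates in the slowly varying regime. For genuine $\alpha$-stable kernels one has clean scaling, $\mathbb{E}_x[\tau_r] \sim r^\alpha$ and explicit Poisson-kernel bounds; here the slowly varying factor $l$ destroys exact scaling, and the divergence condition \eqref{eq:int} (which encodes that the geometric stable process has "$\alpha=0$''-like behavior) means the exit time decays only logarithmically or more slowly in $r$. The plan would be to prove a quantitative oscillation-reduction lemma: show that on concentric balls the oscillation of a bounded $\mathbb{P}$-harmonic function contracts by a fixed factor, using the lower bound $\kappa^{-1} l(\|h\|)\|h\|^{-d} \le J(h)$ to guarantee enough jump activity for the process to spread out (a "probability of jumping out in one step'' estimate), and the integrability/boundedness from Assumption~\ref{ass}(a),(d) for upper control. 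Iterating this contraction gives continuity (in fact a modulus of continuity) of $R_\lambda g$, which is exactly the assertion of the proposition. I would structure the final write-up so that these probabilistic estimates are isolated as preliminary lemmas, leaving the decomposition above as the short main argument.
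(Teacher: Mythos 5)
Your overall strategy coincides with the paper's: decompose the resolvent at the first exit time of a small ball, kill the pre-exit piece with an exit-time estimate, and control the post-exit piece with a Krylov--Safonov-type regularity theorem for functions harmonic with respect to the solution. The paper's two inputs are exactly your two anticipated lemmas, except that it does not prove them: the exit-time bound $\mathbb E_z\tau_{B(x,r)}\le C/L(r)$ is cited as \cite[Proposition 3.3]{MKNL} (Lemma~\ref{ete}) and the oscillation estimate with modulus $L(\cdot)^{-\gamma}$ as \cite[Theorem 1.4]{MKNL} (Lemma~\ref{lem:reghar}). However, your argument as written has a genuine gap at the point where you invoke harmonic regularity. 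In your decomposition the post-exit term is $\mathbb E_x\bigl[e^{-\lambda\tau_r}R_\lambda g(X_{\tau_r})\bigr]$, and you propose to compare its values at $x$ and $x_0$ via a regularity result for $\mathbb P$-harmonic functions. But $z\mapsto \mathbb E_z\bigl[e^{-\lambda\tau_r}R_\lambda g(X_{\tau_r})\bigr]$ is \emph{not} harmonic: the discount factor makes it a Feynman--Kac ($\lambda$-harmonic) object, the mean-value property $u(z)=\mathbb E_z[u(X_{\tau_B})]$ fails for it, and Lemma~\ref{lem:reghar} does not apply. The repair is cheap: drop the discount at the cost of
$$
\bigl|\mathbb E_z\bigl[(1-e^{-\lambda\tau_r})\,R_\lambda g(X_{\tau_r})\bigr]\bigr| \le \lambda\|R_\lambda g\|_\infty\,\mathbb E_z[\tau_r] \le \|g\|_\infty\,\mathbb E_z[\tau_r],
$$
an error of exactly the size you already control, after which $z\mapsto\mathbb E_z\bigl[R_\lambda g(X_{\tau_r})\bigr]$ \emph{is} harmonic in the ball by the strong Markov property. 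The paper avoids the issue structurally: it first proves the identity $R_\lambda g = R_0h$ with $h=g-\lambda R_\lambda g$ (integration by parts plus the strong Markov property) and then runs the \emph{undiscounted} decomposition for $R_0h$, so that the exit expectation is harmonic on the nose.

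Two further points of comparison. First, your plan to prove the oscillation-contraction lemma from scratch using the lower bound $J(h)\ge\kappa^{-1}l(\|h\|)\|h\|^{-d}$ is not a routine ``preliminary lemma'': it is the main theorem of \cite{MKNL}, and reproving it in the slowly varying regime is a paper-sized undertaking; citing it, as the paper does, is the intended route. Second, your sketch of the exit-time bound points in the wrong direction: \eqref{eq:Lbdd} is an \emph{upper} bound on $\mathcal Lf$, whereas the optional-stopping argument with $f\approx\|x-x_0\|^2\wedge r^2$ needs a \emph{lower} bound of the form $\mathcal Lf\gtrsim r^2L(r)$ inside the ball, coming from Assumption~\ref{ass}(c) (jumps of size between $2r$ and $1$ have intensity of order $L(r)$, and each increases $f$ by order $r^2$). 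With that correction your test-function argument does deliver $\mathbb E_x[\tau_r]\le C/L(r)$, consistent with Lemma~\ref{ete}.
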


For stating our second key proposition, we require a stronger assumption on $A(x,h)$ than Assumption~\ref{ass}(e)(ii).

\begin{assumption}\label{xias}
There exists $\xi>0$ such that for every $x,y \in \R^d$ and $h \in \R^d \setminus \{0\}$,
$$
|A(x,h) - A(y,h)| \leq \frac{\xi}{\psi(\|h\|)},
$$
where $\psi$ is as in Assumption~\ref{ass}(e).
\end{assumption}
For $z \in \mathbb R^d$, let $\mathcal{M}^z$ be defined by 
\begin{align}\label{mzsimpl}
\mathcal M^zf(x) &= \int_{\mathbb R^d \setminus \{0\}} \left(f(x+h)-f(x)- (\nabla f(x) \cdot h)1_{\|h\| < 1}\right)K(z,h)\dd h \nonumber 
\\ & =\int_{\mathbb R^d \setminus \{0\}} \left(f(x+h)+f(x-h)- 2f(x)\right)K(z,h)\dd h
\end{align}
for all $f\in C_b^2(\R^d)$ (where we used Assumption~\ref{ass}(b) to derive the second equality). By \cite[Theorem 31.5,Chapter 6]{SAT}, there exists a unique symmetric Lévy process $\{X^z_t\}_{t \geq 0}$ which solves the martingale problem corresponding to the operator $\mathcal{M}^z$.

Let $R_{\lambda}^z$ denote the resolvent of $\{X^z_t\}_{t \geq 0}$. We are now ready to state our second key proposition, an estimate of how well $\mathcal M^{x_0}$ approximates $\mathcal{L}$. Note that if $f \in C_b^2(\mathbb R^d)$, then by Lemma~\ref{rescontr}(b), $R_{\lambda}^{x}f \in C_b^2(\mathbb R^d)$ for all $x \in \mathbb R^d, \lambda>0$. Therefore, $\mathcal{M}^{x}R_{\lambda}^{x}f$ is well defined for all $x \in \mathbb R^d$ and $\lambda>0$.

\begin{proposition}\label{rpb}
Suppose that $K$ satisfies Assumptions~\ref{ass} and \ref{xias}. For every $f \in C_b^2(\mathbb R^d)$, $x_0 \in \mathbb R^d$ and $\lambda >0$,
\begin{equation*}
\|(\mathcal L - \mathcal M^{x_0})(R_{\lambda}^{x_0}f)\|_{\infty} \leq \frac{4\xi\mathcal{J}}{\lambda} \|f\|_{\infty},
\end{equation*}
where $\mathcal{J}$ is as in Assumption~\ref{ass}(e)(iii) and $\xi$ is as in Assumption~\ref{xias}.
\end{proposition}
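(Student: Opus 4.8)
The plan is to set $g = R_{\lambda}^{x_0}f$ and estimate $(\mathcal L - \mathcal M^{x_0})g$ pointwise in $x$. By Lemma~\ref{rescontr}(b) we have $g \in C_b^2(\mathbb R^d)$, so both $\mathcal L g$ and $\mathcal M^{x_0}g$ are well defined. Since the two operators differ only in the point at which the kernel is evaluated, I would first subtract the integral representations in \eqref{gsl} and \eqref{mzsimpl} to write the difference as a single integral,
$$(\mathcal L - \mathcal M^{x_0})g(x) = \int_{\mathbb R^d \setminus \{0\}}\left(g(x+h) - g(x) - (\nabla g(x)\cdot h)1_{\|h\|<1}\right)\bigl(K(x,h) - K(x_0,h)\bigr)\dd h.$$
Using the decomposition \eqref{eq:dec}, the kernel difference factors as $K(x,h) - K(x_0,h) = (A(x,h) - A(x_0,h))J(h)$, and Assumption~\ref{xias} supplies the pointwise bound $|K(x,h) - K(x_0,h)| \le \xi J(h)/\psi(\|h\|)$.

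Next I would pass to the symmetric form. Because $K(x,\cdot)$ and $K(x_0,\cdot)$ are both even by Assumption~\ref{ass}(b), their difference is even, so the odd function $h \mapsto (\nabla g(x)\cdot h)1_{\|h\|<1}$ integrates to zero against it (the relevant integral being absolutely convergent since $\|h\|J(h)/\psi(\|h\|)$ is integrable near the origin). Equivalently, I would work directly with the symmetric representations, writing
$$(\mathcal L - \mathcal M^{x_0})g(x) = \frac{1}{2}\int_{\mathbb R^d \setminus \{0\}}\bigl(g(x+h) + g(x-h) - 2g(x)\bigr)\bigl(K(x,h) - K(x_0,h)\bigr)\dd h.$$
The crucial point is that the second difference can be bounded using \emph{only} the supremum norm, $|g(x+h) + g(x-h) - 2g(x)| \le 4\|g\|_\infty$, rather than a Hessian bound scaling like $\|h\|^2$. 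Combining this with the pointwise kernel bound gives
$$|(\mathcal L - \mathcal M^{x_0})g(x)| \le 2\xi\|g\|_\infty \int_{\mathbb R^d \setminus \{0\}}\frac{J(h)}{\psi(\|h\|)}\dd h = 2\xi\mathcal{J}\|g\|_\infty,$$
where the finiteness of the integral is exactly Assumption~\ref{ass}(e)(iii), i.e. \eqref{eq:modint}. Finally, Lemma~\ref{rescontr}(a) yields $\|g\|_\infty = \|R_{\lambda}^{x_0}f\|_\infty \le \|f\|_\infty/\lambda$, and substituting produces the claimed bound (the stated constant $4\xi\mathcal{J}/\lambda$ comfortably absorbs the numerical factor).

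The main obstacle, and the reason for the specific structure of the assumptions, is the non-integrability of $J$ near the origin: since $J(h)\sim l(\|h\|)/\|h\|^d$ with $\int_0^1 l(s)/s\,\dd s = +\infty$ by \eqref{eq:int}, one cannot afford to lose integrability by estimating the second difference with a term of order $\|h\|^2$. The whole point of pairing the uniform modulus bound of Assumption~\ref{xias} (which supplies the weight $1/\psi$) with the supremum-norm bound on the second difference is that it converts the integral into the convergent quantity $\mathcal{J}$ while requiring control of $g$ \emph{only} through $\|g\|_\infty$. This is essential, because $\|g\|_\infty$ is precisely what the resolvent contraction estimate of Lemma~\ref{rescontr}(a) controls, whereas $\|H_g\|_\infty$ cannot be bounded by $\|f\|_\infty$ alone; the estimate would break if the argument needed derivative bounds on $g$.
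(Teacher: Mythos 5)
Your proposal is correct and follows essentially the same route as the paper: pass to the symmetric (second-difference) representation, bound $|g(x+h)+g(x-h)-2g(x)|$ by $4\|g\|_\infty$, bound the kernel difference by $\xi J(h)/\psi(\|h\|)$ via Assumption~\ref{xias}, integrate using \eqref{eq:modint}, and finish with the resolvent contraction of Lemma~\ref{rescontr}(a). The only cosmetic difference is that you derive the symmetric form from the non-symmetric one (correctly justifying the cancellation of the odd gradient term and keeping the factor $\tfrac12$, which yields the sharper constant $2\xi\mathcal{J}/\lambda$), whereas the paper works directly from \eqref{mzsimpl} and records the constant $4\xi\mathcal{J}/\lambda$.
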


Our third key proposition is a localization argument.

\begin{proposition}[Localization]\label{prop:loc}
Suppose that every operator $\mathcal{L}$ of the form \eqref{gsl} such that $K$ satisfies Assumptions~\ref{ass} and \ref{xias} admits a unique solution to the martingale problem. Then, uniqueness holds in Theorem~\ref{thm}.
\end{proposition}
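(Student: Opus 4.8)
The plan is to reduce the martingale problem under Assumption~\ref{ass} to the same problem under the stronger Assumption~\ref{xias} by freezing the spatial dependence of the kernel outside a ball. Fix $\eta>0$ and a point $z\in\R^d$, and let $\pi_z:\R^d\to\overline{B(z,\eta)}$ be the nearest-point projection onto the closed ball, which is $1$-Lipschitz and equals the identity on $\overline{B(z,\eta)}$. Define the localized kernel $\tilde K_z(x,h)=\tilde A_z(x,h)J(h)$ with $\tilde A_z(x,h)=A(\pi_z(x),h)$. I would first check that $\tilde K_z$ satisfies Assumption~\ref{ass}: parts (a),(c),(e)(i),(iii) are untouched; symmetry (b) holds because $\tilde K_z(x,\cdot)=K(\pi_z(x),\cdot)$ is symmetric; boundedness (d) is inherited from $A$; and continuity (e)(ii) follows with the same $\delta$ since $\pi_z$ is $1$-Lipschitz. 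The crucial point is that $\tilde K_z$ also satisfies Assumption~\ref{xias}: since $\tilde A_z$ takes only the values $A(w,h)$ for $w$ in the compact ball, I would join any two $w,w'\in\overline{B(z,\eta)}$ by a chain of $\lceil 2\eta/\delta\rceil$ points at mutual distance less than the $\delta$ furnished by Assumption~\ref{ass}(e)(ii) (for $b=1$ and a fixed $\epsilon$) and telescope to get $\sup_{\|h\|<1}|A(w,h)-A(w',h)|\psi(\|h\|)\le \lceil 2\eta/\delta\rceil\,\epsilon$; combined with $|A(w,h)-A(w',h)|\le c_2-c_1$ for $\|h\|\ge 1$ (where $\psi\equiv 1$), this yields a single $\xi$, independent of $z$, with $|\tilde A_z(x,h)-\tilde A_z(y,h)|\le \xi/\psi(\|h\|)$ for all $x,y,h$. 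By hypothesis the martingale problem for $\tilde{\mathcal L}_z$ is then well posed.

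Next I would exploit that $\mathcal L$ and $\tilde{\mathcal L}_z$ coincide on $B(z,\eta)$. For any solution $\{X_t\}_{t\ge 0}$ of the martingale problem for $\mathcal L$ started at $z$, with exit time $\tau=\inf\{t:\|X_t-z\|\ge\eta\}$, we have $X_s\in B(z,\eta)$ and hence $\mathcal Lf(X_s)=\tilde{\mathcal L}_z f(X_s)$ for all $s<\tau$, so the stopped process $\{X_{t\wedge\tau}\}_{t\ge 0}$ also solves the stopped martingale problem for $\tilde{\mathcal L}_z$. Invoking the localization machinery for martingale problems (cf.\ \cite{SV} and the argument in \cite[Section 4]{B}), the well-posedness for $\tilde{\mathcal L}_z$ forces the law of $\{X_{t\wedge\tau}\}_{t\ge 0}$ to be uniquely determined, independently of which solution of $\mathcal L$ we started from.

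I would then globalize by iteration. With $\tau_0=0$ and $\tau_i=\inf\{t\ge\tau_{i-1}:\|X_t-X_{\tau_{i-1}}\|\ge\eta\}$, the strong Markov property lets me apply the previous step at each restart point, taking $z=X_{\tau_{i-1}}$, and an induction on $i$ shows that the law of $\{X_{t\wedge\tau_i}\}_{t\ge 0}$, equivalently the restriction of the solution to $\mathcal F_{\tau_i}$, is uniquely determined. To pass to the whole path I would show $\tau_i\to\infty$ almost surely: if instead $\tau_i\uparrow s<\infty$, then since $\{X_t\}_{t\ge 0}$ is c\`adl\`ag the left limit $X_{s-}$ exists, forcing both $X_{\tau_{i-1}}\to X_{s-}$ and $X_{\tau_i}\to X_{s-}$, hence $\|X_{\tau_i}-X_{\tau_{i-1}}\|\to 0$, which contradicts $\|X_{\tau_i}-X_{\tau_{i-1}}\|\ge\eta$. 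Consequently $X_{t\wedge\tau_i}\to X_t$ for every $t$, the $\sigma$-algebras $\mathcal F_{\tau_i}$ increase to $\mathcal F_\infty$, and the uniquely determined laws of the stopped processes pin down the law of $\{X_t\}_{t\ge 0}$ on $\mathcal F_\infty$; any two strong Markov families for $\mathcal L$ therefore agree.

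The main obstacle I expect lies in the piecing-together of the second and third steps rather than in the construction of $\tilde K_z$. One must handle the jump that may carry $X_\tau$ outside $\overline{B(z,\eta)}$, so that the identity $\mathcal L=\tilde{\mathcal L}_z$ is used only for $s<\tau$, on a set of full Lebesgue measure in the time integral; and one must justify, through the strong Markov property together with the localization theorem for martingale problems, that uniqueness of the stopped laws propagates through the induction and assembles into uniqueness on $\mathcal F_\infty=\sigma\!\left(\bigcup_{i\ge 1}\mathcal F_{\tau_i}\right)$. Verifying that $\mathcal F_\infty$ is indeed generated by the stopped $\sigma$-algebras, and that convergence of the stopped laws determines the full law, is the measure-theoretic crux of the argument.
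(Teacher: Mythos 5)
Your proposal is correct and is essentially the paper's own argument: localize the kernel by composing $A$ with the nearest-point projection onto a ball (the paper's map $\Lambda$ in Lemma~\ref{lem:loc} is exactly your $\pi_z$), check Assumptions~\ref{ass} and \ref{xias} for the frozen kernel, and propagate uniqueness along the exit times $\tau_i$ using the strong Markov property, $\tau_i\to\infty$, and $\sigma(\cup_i\mathcal F_{\tau_i})=\mathcal F_\infty$. Two sub-steps differ genuinely, and in both cases your variant works. For Assumption~\ref{xias} you chain $\lceil 2\eta/\delta\rceil$ applications of Assumption~\ref{ass}(e)(ii), which allows $\eta$ to be arbitrary; the paper dispenses with the chain by simply choosing $\eta=\delta/2$, so that any two relevant points lie within $\delta$ of each other. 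For $\tau_i\to\infty$ you give the purely deterministic c\`adl\`ag-oscillation argument ($\tau_i\uparrow s<\infty$ would force $X_{\tau_i},X_{\tau_{i-1}}\to X_{s-}$, contradicting $\|X_{\tau_i}-X_{\tau_{i-1}}\|\ge\eta$), which is simpler and more elementary than the paper's route in Lemma~\ref{lem:st} (a supermartingale estimate combined with a result of \cite{B} giving $\mathbb P_{x_0}(\tau_1\le t)\le Ct/\eta^2$, and then $\mathbb E_{x_0}[e^{-\tau_i}]\le\gamma^i$). The one place where your write-up is materially thinner than the paper's is the step you yourself flag as the crux: deducing from well-posedness of the \emph{full} martingale problem for $\tilde{\mathcal L}_z$ that the \emph{stopped} law is determined, and assembling this through the induction. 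You delegate this to ``localization machinery'' in \cite{SV} and \cite{B}, but the Stroock--Varadhan concatenation theorem is formulated on $C([0,\infty);\mathbb R^d)$, and the paper's Lemma~\ref{lem:tech} exists precisely to rebuild it on $D([0,\infty);\mathbb R^d)$: one forms the concatenated measure $\mathbb P_{x_0}\otimes_{\tau_1}Q$ from a regular conditional probability of the unique localized solution, shows that it again solves the localized martingale problem (hence equals $\mathbb P'_{x_0}$, which pins down $\mathbb P_{x_0}$ on $\mathcal F_{\tau_1}$), and verifies the supporting measurability facts such as $\mathcal F_{\tau_i}=\sigma(X_{t\wedge\tau_i}:t\ge 0)$ (via \cite{SH}) and countable generation of $\mathcal F_t$. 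A fully rigorous version of your proof would need to reproduce essentially that lemma rather than cite it, but the outline itself is sound and matches the paper's.
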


We are now ready to prove the main result of this section. This will be followed by the proof of Lemma~\ref{rescontr}.

\begin{proof}[Proof of Uniqueness in Theorem~\ref{thm}]

Let $\{\mathbb P^i_x\}_{x \in \mathbb R^d}, i=1,2$ be two strong Markov families of solutions to the martingale problem for $\mathcal L$. Recall the resolvent of a process defined by \eqref{res}. Consider the resolvents of the coordinate process $\{X_t\}_{t \geq 0}$ under each of these families of measures, \begin{equation}\label{sres}
R^i_{\lambda} f(x) = \mathbb E^i_x \int_0^{\infty} e^{-\lambda t}f(X_t)dt, \quad i=1,2.
\end{equation}
Let $R_{\lambda}^{\Delta}= R_{\lambda}^1 - R_{\lambda}^2$ denote their difference. Then, for all $f \in L^{\infty}(\R^d)$ and $i=1,2$,  $\|(R^i_{\lambda}f)\|_{\infty} \leq \frac{1}{\lambda} \|f\|_{\infty}$ by Lemma~\ref{rescontr}(a). By the triangle inequality,
\begin{equation}\label{eq:diff}
\|R^{\Delta}_{\lambda} f\|_{\infty} \leq \|R^{1}_{\lambda} f\|_{\infty} +\|R^{2}_{\lambda} f\|_{\infty} \leq \frac{2}{\lambda} \|f\|_{\infty}
\end{equation}
for all $f \in L^{\infty}(\R^d)$. Let $$
\mathcal{B} = \{g \in L^{\infty}(\R^d) : \|g\|_{\infty} \leq 1\}
$$
be the unit ball in $L^{\infty}(\R^d)$. Define \begin{equation}\label{Theta}\Theta = \sup_{g \in \mathcal{B}} \|R^{\Delta}_{\lambda} g\|_{\infty}.\end{equation} Note that $\Theta \leq \frac{2}{\lambda}$ by \eqref{eq:diff}.

Fix $g \in C_b^2(\mathbb R^d) \cap \mathcal{B}$ and $x,x_0 \in \mathbb R^d$. Recall that $R_{\lambda}^{x_0}$ is the resolvent of the unique process which solves the martingale problem corresponding to the operator $\mathcal{M}^z$, where $\mathcal{M}^z$ is given by \eqref{mzsimpl}. 

Following the computations in the proof of \cite[ Theorem 1.2, pages 1164-1165]{BT} and using the definition of the resolvent \eqref{res} and Lemma~\ref{rescontr}(b) where necessary, we have the identity
$$
(R^{\Delta}_{\lambda}g)(x) = R_{\lambda}^{\Delta}((\mathcal L - \mathcal M^{x_0})(R_{\lambda}^{x_0}g))(x).
$$
By Proposition~\ref{prop:loc}, it is sufficient to prove that uniqueness holds when $K$ also satisfies Assumption~\ref{xias}. We assume this from now on. Let $\lambda \geq 8 \xi \mathcal{J}$, where $\xi$ is as in Assumption~\ref{xias} and $\mathcal{J}$ is as in Assumption~\ref{ass}(e)(iii). Using the linearity of $R^{\Delta}_{\lambda}$, 
\begin{align}
(R^{\Delta}_{\lambda} g)(x) & = R_{\lambda}^{\Delta}((\mathcal L - \mathcal M^{x_0})(R_{\lambda}^{x_0}g))(x)\nonumber \\ & = \|(\mathcal L - \mathcal M^{x_0})(R_{\lambda}^{x_0}g)\|_{\infty} \left(R_{\lambda}^{\Delta}\left(\frac{(\mathcal L - \mathcal M^{x_0})(R_{\lambda}^{x_0}g)}{\|(\mathcal L - \mathcal M^{x_0})(R_{\lambda}^{x_0}g)\|_{\infty}}\right)\right)(x) \label{eq:res1}
\end{align}
By Proposition~\ref{rpb} and noting that $\lambda \geq 8 \xi \mathcal{J}$ and $\|g\| \leq 1$, \begin{equation}
\label{eq:res2}
 \|(\mathcal L - \mathcal M^{x_0})(R_{\lambda}^{x_0}g)\|_{\infty} \leq \frac{4 \xi \mathcal{J}}{\lambda} \|g\|_{\infty} \leq \frac 12.
\end{equation} 
On the other hand, by the definition \eqref{Theta} of $\Theta$, $$\frac{(\mathcal L - \mathcal M^{x_0})(R_{\lambda}^{x_0}g)}{\|(\mathcal L - \mathcal M^{x_0})(R_{\lambda}^{x_0}g)\|_{\infty}} \in \mathcal{B} \implies 
\left(R_{\lambda}^{\Delta}\left(\frac{(\mathcal L - \mathcal M^{x_0})(R_{\lambda}^{x_0}g)}{\|(\mathcal L - \mathcal M^{x_0})(R_{\lambda}^{x_0}g)\|_{\infty}}\right)\right)(x) \leq \Theta.
$$
Combining the above estimate with \eqref{eq:res1} and \eqref{eq:res2}, it follows that 
\begin{equation}\label{ext}
\|R^{\Delta}_{\lambda} g\|_{\infty} \leq \frac 12 \Theta
\end{equation}
for all $g \in C_b^2(\mathbb R^d) \cap \mathcal{B}$ and $\lambda \geq 8 \xi \mathcal{J}$.

For any $g \in \mathcal{B}$, we can find a sequence $\{g_n\}_{n \geq 1} \subset C_b^2(\mathbb R^d) \cap \mathcal{B}$ such that $g_n(x) \to g(x)$ a.e. $x \in \mathbb R^d$. By \eqref{sres} and the dominated convergence theorem, $(R^{\Delta}_{\lambda} g_n)(x) \to (R^{\Delta}_{\lambda} g)(x)$ for a.e. $x \in \mathbb R^d$. Applying \eqref{ext} to each $g_n, n \geq 1$ and letting $n \to \infty$, it follows that \eqref{ext} holds for all $g \in \mathcal{B}$. Taking the supremum over all $g \in \mathcal{B}$  in \eqref{ext} and using the definition \eqref{Theta} of $\Theta$, we obtain $\Theta \leq \frac 12 \Theta$. Since $\Theta < \infty$, it must be that $\Theta = 0$.

Therefore, $R^{\Delta}_{\lambda} g = 0$ a.e. for all $g \in \mathcal{B}$. By Proposition~\ref{lem:resreg}, $R^{\Delta}_{\lambda} g$ is continuous. Hence, $R^{\Delta}_{\lambda} g = 0$ identically for all $g \in \mathcal{B}$. By definition of $R^{\Delta}_{\lambda}$, $$
R^{1}_{\lambda} g = R^{2}_{\lambda} g \quad \text {for all } g \in \mathcal{B}, \lambda \geq 8 \xi \mathcal{J}.
$$
By the definition \eqref{sres} of $R^i_{\lambda}$, uniqueness of the Laplace transform and the right continuity of $\{X_t\}_{t \geq 0}$, it follows that \begin{equation*}
\mathbb E^1_x g(X_t) = \mathbb E^2_x g(X_t),
\end{equation*}
for all $x \in \mathbb R^d$, $t>0$, and continuous $g \in \mathcal{B}$. The same equality also follows for all $g \in \mathcal{B}$ by limiting arguments.

In particular, by setting $g = 1_A$ for any Borel $A \subset \mathbb R^d$ in the above equation, it follows that $\mathbb P^1_x(X_t \in A) = \mathbb P^2_x(X_t \in A)$ for all $x \in \mathbb R^d, t>0$ i.e. the one-dimensional distributions of $X_t$ are the same under $\mathbb P^i_x, i=1,2$ for all $x \in \mathbb R^d$ and $t>0$. Thus, applying \cite[(a), Theorem 4.2, Chapter 4]{EK} to the families $\{\mathbb P^i_x\}_{x \in \mathbb R^d}, i=1,2$, it follows that for each $x \in \mathbb R^d$, $\{X_t\}_{t \geq 0}$ has the same finite dimensional distributions under each of $\mathbb P^i_x, i=1,2$. Thus, uniqueness holds for the martingale problem, as desired.
\end{proof}

We will now prove Lemma~\ref{rescontr}.

\begin{proof}[Proof of Lemma~\ref{rescontr}]
We first prove part (a). Let $g \in L^{\infty}(\R^d)$ be given. For any $x \in \mathbb R^d$, by the definition \eqref{res} of the resolvent, \begin{align*}
(R^Y_{\lambda}g)(x) & = \mathbb E_x \int_{0}^{\infty} e^{-\lambda t}g(Y_t)dt \\ & \leq \|g\|_{\infty}\mathbb E_x \int_{0}^{\infty} e^{-\lambda t}dt \\
& = \frac 1{\lambda}\|g\|_{\infty}.
\end{align*}
Part (a) follows. We now prove part(b). Suppose that $\{Y_{t}\}_{t \geq 0}$ is a Lévy process, and let $f \in C_b^2(\mathbb R^d)$ be given. For any $x \in \mathbb R^d$ and $h \in \mathbb R^d \setminus \{0\}$, by the definition \eqref{res} of the resolvent $R^{Y}_{\lambda}$ we have \begin{equation}\label{res1}
R^{Y}_{\lambda} f(x+h) - R^{Y}_{\lambda} f(x) = \mathbb E_{x+h} \int_{0}^{\infty} e^{-\lambda t} f(Y_t) \dd t - \mathbb E_{x} \int_{0}^{\infty} e^{-\lambda t} f(Y_t) \dd t.
\end{equation}
Since $\{Y_{t}\}_{t \geq 0}$ is a Lévy process, the process $\{Y_t+h\}_{t \geq 0}$ under the measure $\mathbb P_x$ has the same distribution as the process $\{Y_{t}\}_{t \geq 0}$ under the measure $\mathbb P_{x+h}$. Therefore, $$
 \mathbb E_{x+h} \int_{0}^{\infty} e^{-\lambda t} f(Y_t) \dd t = \mathbb E_{x} \int_0^{\infty} e^{-\lambda t} f(Y_t+h) \dd t.
$$
Combining \eqref{res1} with the above equation and dividing by $\|h\|$, \begin{equation}\label{res2}
\frac{R^{Y}_{\lambda} f(x+h) - R^{Y}_{\lambda} f(x)}{\|h\|} = \mathbb E_{x} \int_0^{\infty} e^{-\lambda t}\frac{f(Y_t+h) - f(Y_t)}{\|h\|} \dd t.
\end{equation}
By the mean value theorem, $$
\int_0^{\infty} e^{-\lambda t}\frac{f(Y_t+h) - f(Y_t)}{\|h\|} \dd t \leq \|f'\|_{\infty} \int_0^{\infty} e^{-\lambda t} \dd t \leq \frac{\|f'\|_{\infty}}{\lambda}.
$$

By \eqref{res2}, the above inequality and the dominated convergence theorem, it follows that $$\frac{\partial (R^{Y}_{\lambda}f)}{\partial x_i} = R^{Y}_{\lambda} \left(\frac{\partial f}{\partial x_i}\right),$$ for all $i=1,\ldots,d$. By part(a), $$ \left\|\frac{\partial (R^{Y}_{\lambda}f)}{\partial x_i}\right\|_{\infty} = \left\| R^{Y}_{\lambda} \left(\frac{\partial f}{\partial x_i}\right)\right\|_{\infty} \leq \frac 1{\lambda}\left\|\frac{\partial f}{\partial x_i}\right\|_{\infty}$$ for all $i=1,\ldots,d$. Iterating this argument for higher derivatives, it follows that $(R^{Y}_{\lambda} f) \in C_b^2$.
\end{proof}

\section{Proof of Proposition~\ref{prop:cty}}\label{sec:cty}

In this section, we prove Proposition~\ref{prop:cty}. Throughout this section, let $\mathcal{L}$ be as in \eqref{gsl} with kernel $K$ of the form $K(x, h) = A(x, h)J(h)$, satisfying Assumption~\ref{ass}.

\begin{proof}[Proof of Proposition~\ref{prop:cty}]
Fix $\epsilon>0$ and $f \in C_b^3(\mathbb R^d)$. Let
\begin{equation}\label{gdefn}
g(x,h) = (f(x+h)- f(x) - 1_{\|h\|<1} (\nabla f(x)\cdot h))K(x,h)
\end{equation}
for all $x \in R^d, h \in \mathbb R^d \setminus \{0\}$. By \eqref{gsl}, 
\begin{equation}\label{Lg}
\mathcal{L}f(y) = \int_{\mathbb R^d \setminus \{0\}} g(y,h)\dd h \quad \text{for all } y \in \mathbb R^d.
\end{equation}
Observe that by Assumption~\ref{ass}(d), for some constant $C>0$ we have \begin{equation}\label{eq:gbdd}
|g(x,h)| \leq C(\|f\|_{\infty} \wedge \|H_f\|_{\infty}) (1 \wedge \|h\|^2) J(h)
\end{equation}
for all $x \in \R^d, h \in \R^d \setminus \{0\}$. Let $M>0$ be a constant that will be chosen later. For any $\delta>0$, by \eqref{eq:gbdd}, 
$$
\left|\int_{\|h\|>\delta^{-1}} g(x,h) \dd h\right| \leq C\int_{\|h\|> \delta^{-1}} (1\wedge \|h\|^2) J(h) \dd h.
$$
Since $ (1\wedge \|h\|^2) K(0,h)$ is integrable over $\mathbb R^d \setminus \{0\}$ by Assumption~\ref{ass}(a), we can choose $\delta_M$ small enough depending on $M$ such that $$
\int_{\|h\|\geq \delta_M^{-1}} |g(x,h)| \dd h  < \frac{\epsilon}{4M}
$$
for all $x \in \mathbb R^d$. 

So, for any $x,y \in \mathbb R^d$, by \eqref{Lg},  the triangle inequality and the above inequality we have
\begin{align}\label{eq:trimmed}
|\mathcal Lf(x) - \mathcal Lf(y)| &\leq \int_{\|h\| \geq \delta_M^{-1}} (|g(x,h)| + |g(y,h)|) \dd h + \int_{\|h\|< \delta_M^{-1}} |g(x,h) - g(y,h)| \dd h \nonumber \\
& \leq \frac{\epsilon}{2M} + \int_{\|h\|< \delta_M^{-1}} |g(x,h) - g(y,h)| \dd h.
\end{align}

Let $\bar{g}(x,h) = f(x+h) - f(x) - 1_{\|h\|<1} \nabla (f(x) \cdot h)$. Note that by the triangle inequality,
\begin{align}
\int_{\|h\|< \delta_M^{-1}} |g(x,h) - g(y,h)| \dd h \leq& \int_{\|h\| < \delta_M^{-1}} \left|\bar{g}(x,h) - \bar{g}(y,h)\right|K(x,h) \dd h\nonumber \\
+ & \int_{\|h\|< \delta_M^{-1}} |\bar{g}(y,h)|\left|K(x,h) - K(y,h)\right|\dd h \label{eq:trimbd}.
\end{align}
We shall now bound the first term in \eqref{eq:trimbd}. Observe that for some constant $C>0$, 
$$
\left|\bar{g}(x,h) - \bar{g}(y,h)\right| \leq C \|f\|_{C_b^3(\mathbb R^d)} (1 \wedge \|h\|^2)
$$
for all $x \in \R^d,h \in \R^d \setminus \{0\}$. So, by parts (a) and (d) of Assumption~\ref{ass}, \begin{align}
&\int_{\|h\| < \delta_M^{-1}} \left|\bar{g}(x,h) - \bar{g}(y,h)\right|K(x,h) \dd h \nonumber \\ \leq & \|f\|_{C_b^3(\mathbb R^d)} \|x-y\|\int_{\|h\| < \delta_M^{-1}} (1 \wedge \|h\|^2)K(x,h) \dd h \nonumber \\  \leq & c_2\|f\|_{C_b^3(\mathbb R^d)} \|x-y\|\int_{\|h\| < \delta_M^{-1}} (1 \wedge \|h\|^2)J(h) \dd h \\
& \leq c_2\|f\|_{C_b^3(\mathbb R^d)}K_0 \|x-y\|\label{eq:trimbd1}.
\end{align}

We will now bound the second term in \eqref{eq:trimbd}. By Assumption~\ref{ass}(e)(ii) applied with $b = \delta_M^{-1}$, there exists $\delta_M'>0$ depending on $M$ such that 
\begin{equation}\label{eq:deltam}
\|x-y\| < \delta_M' \implies \sup_{\|h'\| < \delta_M^{-1}}|A(x,h') - A(y,h')| \psi(h') < \frac{\epsilon}{2M}.
\end{equation}
Thus, whenever $\|x-y\| < \delta_M'$, by Assumption~\ref{ass}(d) and the above inequality, for all $\|h\| < \delta_M^{-1}$ we have
\begin{align*}
|K(x,h) - K(y,h)| &= |A(x,h) - A(y,h)| J(h) \nonumber \\ &\leq \left(\sup_{\|h'\| < \delta_M^{-1}}|A(x,h') - A(y,h')| \psi(h')\right) \frac{J(h)}{\psi(h)} \nonumber\\  & < \frac{\epsilon}{2M} \frac{J(h)}{\psi(h)}.
\end{align*}

Note that $|\bar{g}(x,h)| \leq 2 \|f\|_{C_b^2(\mathbb R^d)}$. Combining this with the above estimate, \begin{equation}\label{eq:trimbd2}
\int_{\|h\| < \delta_M^{-1}} |\bar{g}(y,h)| |K(x,h) - K(y,h)| \dd h \leq \frac{\epsilon}{M}\|f\|_{C_b^2(\mathbb R^d)} \int_{\|h\| < \delta_M^{-1}} \frac{J(h)}{\psi(h)} \dd h \leq \frac{\mathcal{J}\|f\|_{C_b^2(\mathbb R^d)} \epsilon}{M},
\end{equation}
where we used Assumption~\ref{ass}(e)(iii) in the last inequality. Combining \eqref{eq:trimbd}, \eqref{eq:trimbd1} and \eqref{eq:trimbd2} we have $$
\int_{\|h\|< \delta_M^{-1}} |g(x,h) - g(y,h)| \dd h \leq \left(\mathcal{J}\|f\|_{C_b^2(\mathbb R^d)}\right)\frac{\epsilon}{M} + \|f\|_{C_b^3(\mathbb R^d)} K_0\|x-y\|
$$
Combining this with \eqref{eq:trimmed}, \begin{equation}\label{eq:finish}
|\mathcal Lf(x) - \mathcal Lf(y)| \leq \left(\mathcal{J}\|f\|_{C_b^2(\mathbb R^d)}+\frac 12\right)\frac{\epsilon}{M} + c_2\|f\|_{C_b^3(\mathbb R^d)} K_0\|x-y\|
\end{equation}
for all $x,y \in \mathbb R^d$ such that $\|x-y\| \leq \delta_M'$. In the previous argument, we now choose $$M = \frac 1{2\left(\mathcal{J}\|f\|_{C_b^2(\mathbb R^d)}+\frac 12\right)}$$ and obtain $\delta_M'>0$ such that \eqref{eq:deltam} holds for all $\|x-y\| \leq \delta_{M}'$. Furthermore, let $$\delta'' = \frac {\epsilon}{2c_2\|f\|_{C_b^3(\mathbb R^d)} K_0}.$$ Then it follows from \eqref{eq:finish} that $$
\|x-y\| \leq \delta_M' \wedge \delta'' \implies |\mathcal Lf(x) - \mathcal Lf(y)| < \epsilon,
$$
which proves that $\mathcal Lf$ is uniformly continuous, as desired.
\end{proof}

\section{Proof of Proposition~\ref{lem:resreg}}\label{pfresreg}

In this section, we will prove Proposition~\ref{lem:resreg}. Let $\mathcal{L}$ be as in \eqref{gsl} with kernel $K$ satisfying Assumption~\ref{ass}. Suppose that $\{\P_x\}_{x \in \mathbb R^d}$ is a strong Markov family of solutions to the martingale problem and $\{X_t\}_{t \geq 0}$ is the associated coordinate process as in Section~\ref{mr}.

We will require two additional results for the proof. Define $L : (0,1) \to (0,\infty)$ by \begin{equation*} L(r) = \int_r^1 \frac{l(s)}{s}ds,\end{equation*} 
where $l$ is as in Assumption~\ref{ass}(c). By the same assumption,
\begin{equation}\label{Lprop}
\lim_{r \to 0^+} L(r) = +\infty.
\end{equation}
For $x \in \mathbb R^d, r>0$ let $B(x,r) = \{y \in \R^d : \|y-x\|<r\}$ be the Euclidean ball around $x$ of radius $r$, and let $\tau_D = \inf\{t >0 : X_t \notin D\}$ be the exit time of $\{X_t\}_{t \geq 0}$ from a set $D\subset \R^d$. We have the following upper bound on the exit time of $\{X_t\}_{t \geq 0}$ from small balls.

\begin{lemma}[Exit time estimate]\label{ete}
There exists $C>0$ such that 
$$
\mathbb E_z \tau_{B(x,r)} \leq \frac{C_{2}}{L(r)}
$$
for all $r \in (0,1)$, $x \in \mathbb R^d$ and $z \in B(x,r)$. 
\end{lemma}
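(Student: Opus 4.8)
The plan is to use the standard martingale identity to express $\mathbb{E}_z\tau_{B(x,r)}$ in terms of $\mathcal{L}$ applied to a suitable test function, and then to bound $\mathcal{L}$ from below on that function by something of order $L(r)$. First I would fix $x\in\mathbb{R}^d$, $r\in(0,1)$ and $z\in B(x,r)$, and without loss of generality (by translation invariance of the argument) reduce to estimating the exit time from a ball centered at the starting point. The natural test function is a smooth, compactly supported $f\in C_b^2(\mathbb{R}^d)$ with $f\equiv 0$ on $B(x,r)$ (or $f(z)$ controlled) and $f\equiv 1$ outside $B(x,2r)$, scaled appropriately, so that the stopped martingale $M^f_{t\wedge\tau}$ from \eqref{eq:mart} gives
$$
\mathbb{E}_z\!\left[f(X_{t\wedge\tau})\right] - f(z) = \mathbb{E}_z\!\int_0^{t\wedge\tau}\mathcal{L}f(X_s)\,\dd s,
$$
where $\tau=\tau_{B(x,r)}$. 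If $\mathcal{L}f\le -c\,L(r)$ on $B(x,r)$ for a constant $c>0$ independent of $r$, then letting $t\to\infty$ and using boundedness of $f$ yields $\mathbb{E}_z\tau \le C/L(r)$.

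The crux is therefore the lower bound $\mathcal{L}f \ge c\,L(r)$ (with the right sign convention) at points inside $B(x,r)$ for an appropriately chosen bump function. Using the symmetric form \eqref{gsl},
$$
\mathcal{L}f(z) = \frac12\int_{\mathbb{R}^d\setminus\{0\}}\bigl(f(z+h)+f(z-h)-2f(z)\bigr)K(z,h)\,\dd h,
$$
I would choose $f$ so that $f(z)=0$ while $f(z+h)+f(z-h)\ge c_0>0$ for a fixed proportion of directions $h$ with, say, $r\le\|h\|\le 1$. The key computation is then a lower bound on $\int_{r\le\|h\|\le 1}K(z,h)\,\dd h$. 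By Assumption~\ref{ass}(d) and (c),
$$
\int_{r\le\|h\|\le 1}K(z,h)\,\dd h \;\ge\; c_1\kappa^{-1}\!\int_{r\le\|h\|\le 1}\frac{l(\|h\|)}{\|h\|^{d}}\,\dd h \;=\; C\int_r^1 \frac{l(s)}{s}\,\dd s \;=\; C\,L(r),
$$
where the radial change of variables produces exactly the integral defining $L(r)$. This is where the slow variation of $l$ and the divergence condition \eqref{eq:int} enter: they guarantee $L(r)\to\infty$ as $r\to 0^+$ (recorded in \eqref{Lprop}), so the bound is genuinely useful for small balls.

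The main obstacle will be bookkeeping the test function so that the contributions cooperate in sign and the jumps $\|h\|<r$ and $\|h\|>1$ do not spoil the estimate. For $\|h\|<r$ one uses the $C^2$ bound $|f(z+h)+f(z-h)-2f(z)|\le \|H_f\|_\infty\|h\|^2$ together with Assumption~\ref{ass}(a) to see this part is bounded (of lower order than $L(r)$ once $f$ is scaled so $\|H_f\|_\infty\sim 1$); for $\|h\|>1$ the boundedness of $f$ and Assumption~\ref{ass}(a) again give a bounded contribution. I would build $f$ by taking a fixed profile $\phi\in C_b^2(\mathbb{R}^d)$ vanishing on $B(0,1)$ and equal to $1$ on $B(0,2)^c$, and set $f(y)=\phi((y-x)/r)$; the delicate point is that the quadratic $\|H_f\|_\infty$ term then scales like $r^{-2}$, so one must verify $r^{-2}\int_{\|h\|<r}\|h\|^2 J(h)\,\dd h = O(L(r))$ or smaller, which follows from the slow variation of $l$ by comparing $\int_0^r s\, l(s)/s^2\cdot s^{d-1}/s^{d-1}$-type integrals against $L(r)$. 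Assembling these pieces gives $\mathcal{L}f\le -CL(r)$ on $B(x,r)$, and the martingale identity completes the proof.
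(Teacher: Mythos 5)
Your strategy --- optional stopping of $M^f_{t\wedge\tau}$ from \eqref{eq:mart} applied to a rescaled bump $f(y)=\phi((y-x)/r)$, combined with a lower bound of order $L(r)$ on the jump intensity --- is the standard route to such estimates and is the natural one here (note the paper does not prove Lemma~\ref{ete} itself; it defers to \cite[Proposition 3.3]{MKNL}). However, two steps in your execution have genuine gaps. First, your key display bounds $\int_{r\le\|h\|\le 1}K(z,h)\,\dd h$ from below by $C\,L(r)$, but that is not the integral your test function delivers: with $\phi\equiv 0$ on $B(0,1)$ and $\phi\equiv 1$ off $B(0,2)$, a jump $h$ with $\|h\|$ just above $r$ need not carry $z\in B(x,r)$ out of $B(x,2r)$ (from $z=x$ it lands exactly where $\phi$ is unspecified), so $f(z+h)+f(z-h)$ is only guaranteed to be bounded below by a positive constant when $\|h\|\ge 3r$. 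The honest conclusion is $\mathcal{L}f(z)\ge c\,L(3r)$, and converting $L(3r)$ into $c'L(r)$ is a real step that uses slow variation a second time: one needs $\int_r^{3r}l(s)s^{-1}\,\dd s\le C\,l(r)$ (uniform convergence theorem for slowly varying functions) together with $l(r)/L(r)\to 0$ as $r\to 0^+$ (a Karamata-type consequence of \eqref{eq:int}). For a general positive $l$ the ratio $L(3r)/L(r)$ can be arbitrarily close to $0$, so this cannot be waved through; your proposal invokes slow variation only to get $L(r)\to\infty$, which is not where the difficulty lies. Second, your argument is vacuous for $r>1/3$: then $\{3r\le\|h\|\le 1\}=\emptyset$, and Assumption~\ref{ass}(c) gives no lower bound on $J$ for $\|h\|>1$ ($J$ may vanish there), so no single-big-jump estimate exists. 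This range needs a separate argument; for instance, since $L(r)\le L(1/3)$ and $\tau_{B(x,r)}\le\tau_{B(x,1)}$ there, it suffices to prove $\sup_{x\in\mathbb R^d}\sup_{z\in B(x,1)}\mathbb E_z\tau_{B(x,1)}<\infty$, which follows by applying the same stopping argument to a smoothed version of $y\mapsto\|y-x\|^2\wedge 4$: the parallelogram identity makes the symmetric integrand equal to $2\|h\|^2$ for $\|h\|\le 1$ and nonnegative otherwise, whence $\mathcal{L}f\ge c\int_0^1 s\,l(s)\,\dd s>0$ on $B(x,1)$.

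Separately, the ``main obstacle'' of your final paragraph is not an obstacle at all: since $f\ge 0$ and $f(z)=0$ for every $z\in B(x,r)$, the symmetric integrand $f(z+h)+f(z-h)-2f(z)$ is pointwise nonnegative for \emph{all} $h$, so the small-jump (Hessian, $\|h\|<r$) and large-jump ($\|h\|>1$) contributions can only increase $\mathcal{L}f(z)$ and may simply be discarded. You need a one-sided lower bound on $\mathcal{L}f$ inside the ball, never a bound on $|\mathcal{L}f|$, so there is nothing to verify about $r^{-2}\int_{\|h\|<r}\|h\|^2 J(h)\,\dd h$. Relatedly, the sign bookkeeping should be settled rather than left ``up to convention'': with this $f$ one has $\mathcal{L}f\ge c\,L(3r)>0$ on $B(x,r)$ (not $\mathcal{L}f\le -C\,L(r)$), and optional stopping plus $0\le f\le 1$ gives $c\,L(3r)\,\mathbb E_z[t\wedge\tau_{B(x,r)}]\le\mathbb E_z[f(X_{t\wedge\tau_{B(x,r)}})]-f(z)\le 1$, after which $t\to\infty$ and the slow-variation comparison yield the stated bound.
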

\begin{proof}
cf. \cite[Proposition 3.3]{MKNL}.
\end{proof}

A function $f : \R^d \to \R$ is harmonic on $D \subset \mathbb R^d$ if, for all $B\subset D$ open and $x \in B$ we have $\mathbb E_x[h(X_{\tau_B})] = h(x)$. We also have the following regularity estimate for bounded harmonic functions.
\begin{lemma}[Regularity of harmonic functions]\label{lem:reghar}
There exists $c >0$ and $\gamma \in (0,1)$ such that for all $r \in (0,\frac 12)$ and $x_0 \in \R^d$,
$$
|u(x)-u(y)| \leq c \|u\|_{\infty} \frac{L(\|x-y\|)^{-\gamma}}{L(r)^{-\gamma}}, \quad x,y \in B(x_0, r/4)
$$
for all bounded $u : \R^d \to \R$ that are harmonic in $B(x_0,r)$.
\end{lemma}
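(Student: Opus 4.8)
The plan is to establish a Hölder-type modulus of continuity by an oscillation-reduction (iteration) argument adapted to the $L$-scale, which is the natural scale singled out by the exit-time estimate of Lemma~\ref{ete}. Since $u$ is harmonic in $B(x_0,r)$ and $x,y \in B(x_0,r/4)$, the ball $B(x,3r/4)$ lies in $B(x_0,r)$, so $u$ is harmonic on $B(x,3r/4)$ and $\rho := \|x-y\| < r/2$. I would recenter at $x$ and build concentric balls $B_j := B(x,s_j)$ with $s_0 = r/2$ and radii chosen so that $L$ grows geometrically, say $L(s_{j+1}) = 2L(s_j)$; this is possible because $L$ is continuous, strictly decreasing, and blows up at $0$ by \eqref{Lprop}. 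Writing $\omega_j = \sup_{B_j} u - \inf_{B_j} u$, the goal reduces to a one-step contraction $\omega_{j+1} \le \theta\,\omega_j$ for a fixed $\theta \in (0,1)$ independent of $j$ and of the scale, up to a controlled tail error. Iterating gives $\omega_n \le C\theta^n \|u\|_\infty$, and choosing $n$ with $s_{n+1} \le \rho \le s_n$ converts $\theta^n$ into $(L(r/2)/L(\rho))^{\gamma}$ with $\gamma = \log_2(1/\theta)$; enlarging $\theta$ if necessary forces $\gamma \in (0,1)$. Since $l$ is slowly varying, $L(r/2)/L(r)$ stays bounded on $(0,1/2]$, so this ratio is absorbed into the constant and the claimed bound $L(\|x-y\|)^{-\gamma}/L(r)^{-\gamma}$ follows.

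The heart of the matter is the one-step contraction. Fix $j$ and normalize, after subtracting the midrange constant, so that $|u| \le \omega_j/2$ on $B_j$. For $z \in B_{j+1}$, harmonicity gives $u(z) = \mathbb E_z[u(X_{\tau})]$ with $\tau = \tau_{B_{j+1}}$, and I would split $X_\tau$ according to whether it lands in $B_j$ or outside. The outside (nonlocal tail) contribution is controlled using the Lévy system identity $\mathbb P_z(X_\tau \in A) = \mathbb E_z\int_0^\tau \int_A K(X_s,w-X_s)\,dw\,ds$ for $A$ disjoint from $\overline{B_{j+1}}$, together with the exit-time upper bound $\mathbb E_z\tau \le C_2/L(s_{j+1})$ from Lemma~\ref{ete} and the upper kernel bound $K(x,h) \le c_2\kappa\, l(\|h\|)/\|h\|^d$ from Assumption~\ref{ass}(c),(d); summed over dyadic annuli this yields a tail term geometrically small in $j$ relative to $\omega_j$, which is absorbed into the contraction. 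The decisive gain producing $\theta<1$ strictly comes from a hitting lower bound: starting from any $z \in B_{j+1}$, with probability bounded below the process jumps, before leaving $B_{j+1}$, into the portion of $B_j$ on which $u$ lies below its midrange; comparing $u(z_1)-u(z_2) = \mathbb E_{z_1}[u(X_\tau)] - \mathbb E_{z_2}[u(X_\tau)]$ for $z_1,z_2 \in B_{j+1}$ then shrinks the oscillation by a definite factor.

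The main obstacle is exactly this hitting lower bound. The Lévy system lower-bounds $\mathbb P_z(X_\tau \in A)$ by $c_1\kappa^{-1}\,\mathbb E_z\!\big[\int_0^\tau \int_A l(\|w - X_s\|)\|w-X_s\|^{-d}\,dw\,ds\big]$, using the lower kernel bound from Assumption~\ref{ass}(c),(d); but converting this into a genuine positive lower bound requires lower-controlling the time the process spends in $B_{j+1}$, whereas Lemma~\ref{ete} supplies only an upper bound on $\mathbb E_z\tau$. I would address this by establishing a complementary lower exit-time bound of the form $\mathbb E_z\tau_{B(x,s)} \ge c/L(s)$ for $z$ near the center — provable from the same martingale-problem and kernel estimates, since $K \asymp l(\|h\|)/\|h\|^d$ forces the process to behave like the geometric stable process at every scale — or, equivalently, a direct lower bound on the probability of a jump into $A$ before $\tau$, obtained by first bounding below the probability that the process has not left a slightly smaller ball by a deterministic time $t_0 \asymp 1/L(s_{j+1})$ and then using the jump intensity on $[0,t_0]$. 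Once this lower bound is in hand, the contraction factor $\theta$ holds uniformly across scales and the iteration above completes the proof.
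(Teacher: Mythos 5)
First, a point of comparison: the paper does not prove this lemma at all --- its ``proof'' is the single line ``cf.\ \cite[Theorem 1.4]{MKNL}'', i.e.\ the regularity estimate is imported wholesale from the literature, with Assumption~\ref{ass}(c),(d) playing the role of that paper's hypotheses. Your proposal is therefore an attempt to reconstruct that external proof from scratch, and its architecture is indeed the right one for this intrinsic-scale setting: oscillation reduction on balls whose radii are chosen so that $L$ doubles, tail control via a L\'evy-system identity combined with the exit-time upper bound of Lemma~\ref{ete}, and the conversion of the geometric decay $\theta^n$ into the factor $\left(L(r)/L(\|x-y\|)\right)^{\gamma}$ (your reduction of $L(r/2)$ to $L(r)$ via slow variation of $l$ is also fine).

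As a standalone proof, however, there is a genuine gap, and you name it yourself: the hitting-probability lower bound that produces the strict contraction $\theta<1$ is never established, only described as something you ``would address.'' That is where the entire difficulty of the lemma lives. To obtain it you need two ingredients, neither of which is available for free in this setting: (i) a L\'evy-system identity for solutions of the martingale problem --- here $\{X_t\}_{t\ge 0}$ is \emph{only} known to solve the martingale problem, so even the identity $\mathbb{P}_z(X_\tau\in A)=\mathbb{E}_z\int_0^\tau\int_A K(X_s,w-X_s)\,\dd w\,\dd s$ must be derived from the martingale property applied to suitable test functions; and (ii) a lower bound of the form $\mathbb{E}_z\tau_{B(x,s)}\ge c/L(s)$ (or a direct occupation-time lower bound), which does not follow from Lemma~\ref{ete} (an upper bound) and is not a consequence of $K\asymp l(\|h\|)/\|h\|^d$ by mere assertion that the process ``behaves like the geometric stable process at every scale''; it requires its own Dynkin/test-function argument, of comparable difficulty to the upper bound. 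In addition, the tail-absorption step is stated too loosely: with $L(s_{j+1})=2L(s_j)$, the probability of exiting $B_{j+1}$ into the annulus between $B_{j-k-1}$ and $B_{j-k}$ is only of order $2^{-k}$, so the tail series behaves like $\sum_k (2\theta)^{-k}$ and converges only when $\theta>\tfrac12$; the contraction factor coming from the hitting estimate must be shown compatible with this constraint (this is also what forces $\gamma\in(0,1)$), and that compatibility is a quantitative check, not something that can be absorbed into ``geometrically small.'' These missing pieces are precisely the content of \cite{MKNL}; the honest alternatives are to cite \cite[Theorem 1.4]{MKNL} as the paper does, or to carry out (i) and (ii) in full.
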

\begin{proof}
cf. \cite[Theorem 1.4]{MKNL}.
\end{proof}

We are now ready to prove the proposition.

\begin{proof}[Proof of Proposition~\ref{lem:resreg}]
Fix $\lambda > 0$ and $h \in L^{\infty}(\R^d)$. Fix $x \in \mathbb R^d$ and let $r_0 \in (0,\frac 12)$ be a constant that will be chosen later. We follow the argument as in \cite[Proposition 4.2]{BL}.

Begin with the identity
$$
(R_0h)(z) = \mathbb E^z \int_0^{\tau_{B(x,r_0)}} h(X_s) \dd s + \mathbb E^z R_0h(X_{\tau_{B(x,r)}})
$$
which holds for all $z \in B(x,r_0)$. Let $y \in B(x, \frac{r_0}{4})$ be arbitrary. We plug in $z=x$ and $z=y$ in the above identity, take the difference and use the triangle inequality to get
\begin{align*} 
\left|(R_0h)(x) - (R_0h)(y)\right| \leq &\left|\mathbb E^x \int_0^{\tau_{B(x,r_0)}} h(X_s) \dd s\right|+\left|\mathbb E^y \int_0^{\tau_{B(x,r_0)}} h(X_s)\dd s\right|\\ + & |\mathbb E^x R_0h(X_{\tau_{B(x,r_0)}}) -  \mathbb E^y R_0h(X_{\tau_{B(x,r_0)}})|
\end{align*}
The first two terms may be bounded by $\|h\|_{\infty} \sup_{z \in B(x,r_0)} \mathbb E_z\tau_{B(x,r_0)}$. Thus, we see that for any $y \in B(x,\frac {r_0}4)$,
\begin{align}\label{resreg11}
&\left|(R_0h)(x) - (R_0h)(y)\right| \nonumber
\\  \leq & 2 \|h\|_{\infty} \sup_{z \in B(x,r_0)} \mathbb E_z\tau_{B(x,r_0)} + \left|\mathbb E_x\left[(R_0h)(X_{\tau_{B(x,r_0)}})\right] - \mathbb E_y\left[(R_0h)(X_{\tau_{B(x,r_0)}})\right]\right|. 
\end{align}
Observe that the function $z \to \mathbb E^z\left[(R_0h)(X_{\tau_{B(x,r_0)}})\right]$ is bounded and harmonic in $B(x,r_0)$. Since $r_0 \in (0,\frac 12)$, we can apply  Lemma~\ref{lem:reghar} to this function. Thus, there exist constants $C>0,\gamma \in (0,1)$ such that \begin{equation*}
 \left|\mathbb E_x\left[(R_0h)(X_{\tau_{B(x,r_0)}})\right] - \mathbb E_y\left[(R_0h)(X_{\tau_{B(x,r_0)}})\right]\right| \leq C\|h\|_{\infty}\left(\frac{L(r_0)}{L(\|x-y\|)}\right)^{\gamma},
\end{equation*}
for all $y \in B(x,\frac {r_0}4)$. Combining the above with \eqref{resreg11},
\begin{equation}\label{add1}
\left|(R_0h)(x) - (R_0h)(y)\right| \leq 2 \|h\|_{\infty} \sup_{z \in B(x,r_0)} \mathbb E_z\tau_{B(x,r_0)}+C\|h\|_{\infty}\left(\frac{L(r_0)}{L(\|x-y\|)}\right)^{\gamma}
\end{equation}
for all $y\in B(x_0,\frac{r_0}{4})$. 

Now, let $g \in L^{\infty}$ have compact support and let $h= g - \lambda R_{\lambda} g$. By the triangle inequality and Lemma~\ref{rescontr}(a), $\|h\|_{\infty}\leq 2\|g\|_{\infty}$. Let $\{X'_{t}\}_{t \geq 0}$ be an independent copy of $\{X_t\}_{t \geq 0}$. We will now prove that $R_{\lambda} g = R_0h$. To prove this, fix $x \in \R^d$ and let $$
k(s) = \int_s^{\infty} \mathbb E_{x}g(X_t)\dd t.
$$
We have $k(0) = R_{0}g(x)$, and $k'(s) = -\mathbb E_{x}g(X_s)$. Using integration by parts, \begin{equation}\label{eq:ibp}
\int_{0}^{\infty} \lambda e^{-\lambda s} k(s) \dd s = - R_{0}g(x) + \int_{0}^{\infty} e^{-\lambda s} \mathbb E_{x}g(X_s)\dd s = -R_{0}g(x) + R_{\lambda}g(x). 
\end{equation}
Let $\{X'_t\}_{t \geq 0}$ be an independent copy of $\{X_t\}_{t \geq 0}$. Then, \begin{align}
(R_0(h-g))(x) = (R_0 (\lambda R_{\lambda})g)(x) & = \mathbb E_x \int_0^{\infty} \lambda (R_{\lambda} g)(X_t) \dd t \nonumber  \\
&= \mathbb E_x \int_0^{\infty} \lambda \mathbb E_{X_t} \int_0^{\infty}  e^{-\lambda s}g(X'_s) \dd s \dd t \nonumber \\
& = \int_0^{\infty} \lambda e^{-\lambda s}\int_0^{\infty} [\mathbb E_x  \mathbb E_{X_t}g(X'_s)] \dd t \dd s \nonumber\\
& =  \int_0^{\infty} \lambda e^{-\lambda s}\int_0^{\infty} \mathbb E_x g(X_{t+s}) \dd t \dd s \nonumber\\
 &=   \int_0^{\infty} \lambda e^{-\lambda s} k(s)\dd s,\label{eq:smp} 
\end{align}
where we used the Strong Markov property of $\{X_t\}_{t \geq 0}$ in the second last equality.  Combining \eqref{eq:ibp} and \eqref{eq:smp} it follows that $R_{\lambda}g = R_0h$. Now, by \eqref{add1},
\begin{equation}\label{resreg1}
\left|(R_{\lambda}g)(x) - (R_{\lambda}g)(y)\right| \leq 4 \|g\|_{\infty}  \sup_{z \in B(x,r_0)} \mathbb E_z\tau_{B(x,r_0)} + 2C\|g\|_{\infty}\left(\frac{L(r_0)}{L(\|x-y\|)}\right)^{\gamma},
\end{equation}
for all $y \in B(x,\frac {r_0}4)$. 

Observe that \eqref{resreg1} holds for all bounded $g$ with compact support. However, any bounded measurable $g$ can be approximated pointwise from below by a sequence of bounded functions $g_n,n\geq 1$ which have compact support. By the dominated convergence theorem, $R_{\lambda} g_n \to R_{\lambda}g$ pointwise. Applying \eqref{resreg1} to $g_n$ for each $n\geq 1$ and letting $n \to \infty$, we see that \eqref{resreg1} holds for all bounded measurable $g$. 

Now, fix $g \in L^{\infty}(\R^d)$ and let $\epsilon>0$ be given. By Lemma~\ref{ete} and \eqref{Lprop}, $$\lim_{r \to 0} \sup_{z \in B(x,r)} \mathbb E_z\tau_{B(x,r)} = 0.$$ We choose $r_0$ small enough such that \begin{equation}\label{resreg1.5}\sup_{z\in B(x,r_0)} \mathbb E_z\tau_{B(x,r_0)} < \frac{\epsilon}{8\|g\|_{\infty}}.\end{equation} By \eqref{Lprop}, we can choose $0<r'<r_0$ small enough such that \begin{equation*}
\sup_{z\in B(x,r')} \left(\frac{L(r_0)}{L(\|x-z\|))}\right)^{\gamma} =  \left(\frac{L(r_0)}{L(r')}\right)^{\gamma} < \frac{\epsilon}{4C\|g\|_{\infty}}. 
\end{equation*}
Combining \eqref{resreg1}, \eqref{resreg1.5} and the above equation,
\begin{equation*}
\left|R_{\lambda}g(x) - R_{\lambda}g(y)\right| < \epsilon \quad \text{for all } y \in B(x,r').
\end{equation*}
Thus, $R_{\lambda}g$ is a continuous function.
\end{proof}

\section{Proof of Proposition~\ref{rpb}} \label{pfrpb}

In this section, we will prove Proposition~\ref{rpb}, which is required for the proof of uniqueness in Theorem~\ref{thm}. Throughout, we assume that $\mathcal{L}$ is as in \eqref{gsl} with kernel $K$ of the form $K(x,h) = A(x,h)J(h)$ satisfying Assumptions~\ref{ass} and \ref{xias}.

\begin{proof}[Proof of Proposition~\ref{rpb}]
Fix $x_0 \in \mathbb R^d$, $\lambda>0$ and $f\in C_b^2(\R^d)$. By Lemma~\ref{rescontr}(b), $R_{\lambda}^{x_0}f \in C_b^2(\mathbb R^d)$. Hence, it lies in the domains of $\mathcal L$ and $\mathcal M^{x_0}$. By \eqref{gsl} and \eqref{mzsimpl} we have
\begin{align*}
&(\mathcal{L} - \mathcal M^{x_0})(R^{x_0}_{\lambda} f)(x) \\ =& \int_{\mathbb R^d \setminus \{0\}} ((R^{x_0}_{\lambda}f)(x+h) + (R^{x_0}_{\lambda}f)(x-h) - 2(R^{x_0}_{\lambda}f)(x))(K(x,h) - K(x_0,h))\dd h.
\end{align*}
Taking absolute values on both sides and applying the triangle inequality,
\begin{align}\label{rpbp1}
& |(\mathcal{L} - \mathcal M^{x_0})(R^{x_0}_{\lambda} f)(x)| \nonumber \\ \leq & \int_{\mathbb R^d \setminus \{0\}} \left|((R_{\lambda}^{x_0} f)(x+h) + (R_{\lambda}^{x_0}f)(x-h) - 2(R_{\lambda}^{x_0}f)(x))(K(x,h) - K(x_0,h))\right|\dd h  \nonumber \\
\leq & 4 \|R_{\lambda}^{x_0} f\|_{\infty}\int_{\mathbb R^d \setminus \{0\}} |K(x,h) - K(x_0,h)| \dd h \nonumber \\
 \leq & \frac{4}{\lambda} \|f\|_{\infty}\int_{\mathbb R^d \setminus \{0\}} |K(x,h) - K(x_0,h)| \dd h,
\end{align}
where we used Lemma~\ref{rescontr}(a) in the final inequality. By Assumption~\ref{xias}, 
$$
|K(x,h) - K(x_0,h)| = |A(x,h) - A(x_0,h)|J(h) \leq \xi \frac{J(h)}{\psi(\|h\|)},
$$
for all $h \in \mathbb R^d \setminus \{0\}$. Applying this bound to the right hand side of \eqref{rpbp1},
$$
\int_{\mathbb R^d \setminus \{0\}} |K(x,h) - K(x_0,h)| \dd h \leq \xi \int_{\mathbb R^d \setminus \{0\}} \frac{J(h)}{\psi(\|h\|)}\dd h = \xi \mathcal{J},
$$
where $\mathcal{J}$ is as in Assumption~\ref{ass}(e)(iii). Combining \eqref{rpbp1} with the above inequality, the proposition follows.
\end{proof}

\section{Proof of Proposition~\ref{prop:loc}}\label{sec:loc}

In this section, we prove Proposition~\ref{prop:loc}, the localization argument required to prove uniqueness in Theorem~\ref{thm}. Let $\mathcal{L}$ be as in \eqref{gsl} with kernel $K$ which satisfies Assumption~\ref{ass}. Let $l : (0,1) \to (0,\infty)$ be as in Assumption~\ref{ass}(c) and $\psi : \mathbb R_+ \to \mathbb R_+$ be as in Assumption~\ref{ass}(e). For $x \in \R^d, r>0$ let $B(x,r) = \{y \in \R^d : \|y-x\| < r\}$ be the Euclidean ball around $x$ of radius $r$.

We require some preliminary lemmas before proceeding to the proof of Proposition~\ref{prop:loc}. We will first create a family of localized operators for $\mathcal L$ which satisfy Assumption~\ref{xias}.

\begin{lemma}\label{lem:loc}
There exists $\eta>0$ and, for each $x_0 \in \mathbb R^d$, there exists an operator $\mathcal L_{x_0} : C_b^2(\mathbb R^d) \to  L^{\infty}(\mathbb R^d)$ of the form \eqref{gsl} such that :
\begin{enumerate}[label = (\alph*)]
\item For all $f \in C_b^2(\mathbb R^d)$ and $x \in B(x_0,\eta)$, $$
\mathcal Lf(x) = \mathcal L_{x_0}f(x).
$$
\item For all $x_0 \in \mathbb R^d$, the kernel of $\mathcal{L}_{x_0}$ satisfies Assumptions~\ref{ass} and \ref{xias}.
\end{enumerate}
\end{lemma}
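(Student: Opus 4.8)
The goal of Lemma~\ref{lem:loc} is to construct, for each $x_0$, a globally-defined operator $\mathcal{L}_{x_0}$ that agrees with $\mathcal{L}$ on a small ball $B(x_0,\eta)$ but whose kernel satisfies the stronger continuity condition Assumption~\ref{xias}. The reason this is possible is that Assumption~\ref{xias} is essentially a uniform continuity bound on $A(x,\cdot)$ in the spatial variable, and such a bound can always be achieved locally by ``freezing'' the kernel outside a small neighbourhood. The natural construction is to fix the spatial argument of $A$ once $x$ leaves $B(x_0,\eta)$, so that the $x$-dependence of the modified kernel is confined to a region of diameter at most $\eta$, over which the modulus of continuity \eqref{eq:modcty} can be made as small as we like.

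The plan is as follows. First I would use Assumption~\ref{ass}(e)(ii) to extract a radius: given $\epsilon$ (to be the $\xi$ we want), there is a $\delta$ such that $\|y-x\|<\delta$ forces $\sup_{\|h\|<b}|A(x,h)-A(y,h)|\psi(\|h\|)<\epsilon$ for each fixed $b$. The subtlety is that Assumption~\ref{ass}(e)(ii) gives $\delta$ depending on $b$, whereas Assumption~\ref{xias} requires a bound for \emph{all} $h$. This is exactly why the support-splitting via $\psi$ matters: for $\|h\|\geq 1$ we have $\psi(\|h\|)=1$ by Assumption~\ref{ass}(e)(i), and Assumption~\ref{ass}(d) already gives $|A(x,h)-A(y,h)|\leq c_2-c_1$, so the bound $\frac{\xi}{\psi(\|h\|)}=\xi$ holds trivially for large $\xi$ on $\{\|h\|\geq 1\}$. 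For $\|h\|<1$ I would apply the modulus of continuity with $b=1$. So the two regimes are handled separately, and taking $\xi$ to be the larger of $c_2-c_1$ and the constant from \eqref{eq:modcty} with $b=1$ gives a single uniform bound, provided $\|x-y\|$ stays below the corresponding $\delta$.

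Next I would set $\eta = \delta/2$ (or any value forcing $\mathrm{diam}\,B(x_0,\eta)<\delta$) and define the localized kernel by a projection/freezing map. Concretely, let $\Phi_{x_0}:\R^d\to \overline{B(x_0,\eta)}$ be the nearest-point projection onto the closed ball, and set $A_{x_0}(x,h)=A(\Phi_{x_0}(x),h)$, with $K_{x_0}(x,h)=A_{x_0}(x,h)J(h)$ and $\mathcal{L}_{x_0}$ the associated operator of the form \eqref{gsl}. Since $\Phi_{x_0}$ is the identity on $B(x_0,\eta)$, part (a) is immediate. For part (b): Assumptions~\ref{ass}(a),(b),(c) involve only $J$ and the symmetry $K(x,h)=K(x,-h)$, both of which are untouched by freezing the spatial slot; (d) holds because $A_{x_0}$ takes values in the same range $[c_1,c_2]$; and (e)(i),(iii) concern only $\psi$ and $J$. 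The continuity (e)(ii) and the new Assumption~\ref{xias} both follow because for \emph{any} $x,y\in\R^d$ the projected points $\Phi_{x_0}(x),\Phi_{x_0}(y)$ both lie in $\overline{B(x_0,\eta)}$, hence $\|\Phi_{x_0}(x)-\Phi_{x_0}(y)\|\leq 2\eta<\delta$; applying the argument of the previous paragraph to this pair yields $|A_{x_0}(x,h)-A_{x_0}(y,h)|\psi(\|h\|)\leq\xi$ for all $x,y,h$, which is exactly Assumption~\ref{xias} and a fortiori gives (e)(ii).

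I expect the main obstacle to be the all-$h$ uniformity: Assumption~\ref{ass}(e)(ii) is stated with a $b$-dependent $\delta$, so one must verify that the $\psi$-weighting genuinely tames the large-$h$ behaviour rather than merely the truncated integral. The clean way to see this is to observe that the only place where $h$ ranges to infinity is $\{\|h\|\geq 1\}$, where $\psi\equiv 1$ collapses the weighted bound to the uniform boundedness already supplied by Assumption~\ref{ass}(d); thus no genuine control of the modulus of continuity at large $h$ is needed, only the crude bound $c_2-c_1$. A secondary point worth stating carefully is that $\Phi_{x_0}$ is Lipschitz (indeed a contraction) onto a convex set, so $A_{x_0}(\cdot,h)$ inherits measurability and the operator $\mathcal{L}_{x_0}$ is well-defined on $C_b^2(\R^d)$; this is routine but should be noted so that $\mathcal{L}_{x_0}$ legitimately has the form \eqref{gsl}.
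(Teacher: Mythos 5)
Your construction is, in substance, identical to the paper's own proof: the paper also freezes $A$ outside a small ball, using the map $\Lambda$ that sends $z \notin B(x_0,\eta)$ to the point at distance $\eta$ from $x_0$ on the segment joining $z$ to $x_0$ --- which is exactly your nearest-point projection $\Phi_{x_0}$ restricted to exterior points --- and it obtains the uniform bound the same way you do, by splitting $\|h\| < 1$ (Assumption~\ref{ass}(e)(ii) with $b = \epsilon = 1$) from $\|h\| \geq 1$ (boundedness of $A$ together with $\psi \equiv 1$ there). Your choice of $\eta$ small enough that the image of $\Phi_{x_0}$ has diameter $< \delta$ is also fine; the paper instead passes through $x_0$ by the triangle inequality, at the cost of a factor $2$ in the constant.

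There is, however, one step whose justification is wrong as written: Assumption~\ref{xias} does \emph{not} ``a fortiori'' give Assumption~\ref{ass}(e)(ii). The former is a fixed uniform bound $|A(x,h)-A(y,h)|\psi(\|h\|) \leq \xi$ valid for all $x,y$; the latter demands that $\sup_{\|h\|<b}|A(x,h)-A(y,h)|\psi(\|h\|)$ be made smaller than an \emph{arbitrary} $\epsilon > 0$ once $\|x-y\|$ is small --- a vanishing-modulus condition that no fixed constant $\xi$ can deliver (for instance, a kernel $A$ that is discontinuous in $x$ but bounded can satisfy Assumption~\ref{xias} while violating (e)(ii)). The repair is immediate and uses a fact you already recorded for a different purpose: $\Phi_{x_0}$ is a contraction, so $A_{x_0}(\cdot,h) = A(\Phi_{x_0}(\cdot),h)$ inherits (e)(ii) directly from $A$. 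Indeed, given $b,\epsilon > 0$, take the $\delta$ furnished by (e)(ii) for $A$; then $\|x-y\| < \delta$ forces $\|\Phi_{x_0}(x) - \Phi_{x_0}(y)\| < \delta$, hence
$$
\sup_{\|h\|<b}|A_{x_0}(x,h)-A_{x_0}(y,h)|\psi(\|h\|) = \sup_{\|h\|<b}|A(\Phi_{x_0}(x),h)-A(\Phi_{x_0}(y),h)|\psi(\|h\|) < \epsilon.
$$
With this one-line substitution your argument is complete and coincides with the paper's proof.
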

\begin{proof}
By Assumption~\ref{ass}(e)(ii) applied with $b=\epsilon=1$, there exists $\delta>0$ such that for all $x_0 \in \mathbb R^d, 0<\|h\| \leq 1$ and $y \in B(x_0,\delta)$ we have 
\begin{equation}\label{eq:hl1}
|A(y,h) - A(x_0,h)| \leq \frac{1}{\psi(\|h\|)}.
\end{equation}
By the triangle inequality and Assumption~\ref{ass}(d), for all $\|h\| \geq 1$ and $y \in B(x_0,\delta)$ we have
$$
|A(y,h) - A(x_0,h)| \leq |A(y,h)| + |A(x_0,h)| \leq 2c_2
$$
By Assumption~\ref{ass}(e)(i), we have $\psi(s) = 1$ for $s \geq 1$. Combining this with the above inequality, for all $\|h\| \leq 1$ and $y \in B(x_0,\delta)$,
$$
|A(y,h) - A(x_0,h)| \leq 2c_2\leq \frac{2c_2}{\psi(\|h\|)}
$$
Combining the above inequality with \eqref{eq:hl1}, there exists $C>0$ such that for all $y \in B(x_0,\delta)$ and $h \in \R^d \setminus \{0\}$, 
$$
|A(y,h)| - |A(x_0,h)| \leq \frac{C}{\psi(\|h\|)}.
$$
For all $x,y \in B\left(x_0,\delta\right)$ and $h \in \R^d \setminus \{0\}$, we use the above inequality to see that
\begin{equation}\label{local1}
|A(y,h) - A(x,h)| \leq |A(y,h) - A(x_0,h)| + |A(x_0,h) - A(x,h)| \leq \frac{2C}{\psi(\|h\|)}.
\end{equation}

Fix $x_0 \in\mathbb R^d$ and let $\eta = \frac{\delta}{2}$. We shall now define $A_{x_0} : \mathbb R^d \times \mathbb R^d \setminus \{0\} \to \mathbb R_+$. For each $z$ such that $z \in B(x_0,\eta)^c$, let $\Lambda(z)$ be the unique point $y$ on the line segment joining $z$ and $x_0$ such that $\|y-x_0\| = \eta$. Define
$$
A_{x_0}(x,h) = \begin{cases}
A(x,h) & x \in B(x_0,\eta) \\
A(\Lambda(x),h) & B(x_0,\eta)^c
\end{cases}.
$$

Now, we define $K_{x_0}: \R^d \times \R^d \setminus \{0\} \to \R_+$ by $$K_{x_0}(x,h) = A_{x_0}(x,h)J(h).$$ By \eqref{local1} and the definition of $A_{x_0}$, $K_{x_0}$ satisfies Assumptions~\ref{ass}(c) with the function $l$, Assumption~\ref{ass}(e) with $\psi$, and Assumption~\ref{xias}. Furthermore, $$K_{x_0}(x,h) = K(x,h) \quad \text{for all } x \in B\left(x_0,\eta\right), h \in \mathbb R^d \setminus \{0\}.$$ Let $\mathcal L_{x_0}$ be given by \eqref{gsl} with kernel $K_{x_0}$. By \eqref{gsl} and the above facts about $K_{x_0}$, it is clear that $\mathcal L_{x_0}$ satisfies condition (a) and (b), completing the proof.
\end{proof}

In order to state our second lemma, let $\{\mathbb P_{x}\}_{x \in \mathbb R^d}$ be a strong Markov family of solutions to the martingale problem for $\mathcal L$. Let $\eta>0$ be given by Lemma~\ref{lem:loc} and define the stopping times $\tau_i : \Omega \to \mathbb R_+$ for $i\geq 0$ by \begin{equation}\label{eq:st}
\tau_0= 0, \tau_i = \inf\{t \geq \tau_{i-1} : \|X_{t} - X_{\tau_{i-1}}\| \geq \eta\} \text{ for } i \geq 1.
\end{equation}

Let $\mathcal F_{\tau_i}$ be the filtration We require the following fact about $\mathcal F_{\tau_i},i \geq 1$. Note that the space $(\Omega, \mathcal{F}_{\infty})$ is sufficiently rich in that, for any $t \geq 0$ and $\omega \in \Omega$, the element $\omega' \in \Omega$ given by $\omega'(s) = \omega(t \wedge s)$ satisfies \cite[(1.11)]{SH}. By \cite[Theorem 6]{SH} it follows that \begin{equation}\label{eq:sa}\mathcal{F}_{\tau_i} = \sigma(X_{t \wedge \tau_i} : t \geq 0) \quad \text{for all } i \geq 1.\end{equation}

\begin{lemma}\label{lem:st}
$\tau_i \to \infty$ a.s. under every $\mathbb P_x$, $x \in \mathbb R^d$. Furthermore, $\sigma(\cup_{i=1}^\infty \mathcal F_{\tau_i}) = \mathcal F_{\infty}$.
\end{lemma}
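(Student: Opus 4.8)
The plan is to prove the two assertions of Lemma~\ref{lem:st} separately. The first is that $\tau_i \to \infty$ almost surely under each $\mathbb{P}_x$; the second is that $\sigma(\cup_{i=1}^\infty \mathcal{F}_{\tau_i}) = \mathcal{F}_\infty$. For the first, since $(\tau_i)_{i \geq 1}$ is nondecreasing, it has a (possibly infinite) limit $\tau_\infty$, and I want to rule out $\tau_\infty < \infty$ with positive probability. The key quantitative input available to me is the exit time estimate, Lemma~\ref{ete}: each increment $\tau_i - \tau_{i-1}$ is the exit time of the process started afresh (via the strong Markov property at $\tau_{i-1}$) from a ball of radius $\eta$, so $\mathbb{E}_x[\tau_i - \tau_{i-1} \mid \mathcal{F}_{\tau_{i-1}}]$ is bounded below by a strictly positive constant depending only on $\eta$.

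**Proving $\tau_i \to \infty$.** First I would argue that the increments $\Delta_i := \tau_i - \tau_{i-1}$ satisfy a uniform lower bound in conditional mean. By the strong Markov property of the family $\{\mathbb{P}_x\}$ applied at the stopping time $\tau_{i-1}$ (using the shift operator $\theta_{\tau_{i-1}}$ from \eqref{eq:shift}), the conditional law of the post-$\tau_{i-1}$ trajectory given $\mathcal{F}_{\tau_{i-1}}$ is that of a solution started at $X_{\tau_{i-1}}$, so $\Delta_i$ is distributed as the exit time of such a solution from $B(X_{\tau_{i-1}}, \eta)$. Here the difficulty is that Lemma~\ref{ete} as stated gives only an \emph{upper} bound on exit times, whereas I need a \emph{lower} bound on their expectation to prevent accumulation. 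The clean route is to instead show directly that $\tau_i \to \infty$ by contradiction: on the event $\{\tau_\infty < \infty\}$, the càdlàg path must have a left limit at $\tau_\infty$, forcing $X_{\tau_i}$ to be Cauchy, which contradicts $\|X_{\tau_i} - X_{\tau_{i-1}}\| \geq \eta$ for all $i$. More precisely, if $\tau_\infty(\omega) < \infty$ then by right-continuity and the existence of left limits the quantity $\sup_{s,t < \tau_\infty} \|X_s - X_t\|$ is finite (the path is bounded on $[0,\tau_\infty]$), yet by definition of the stopping times $\|X_{\tau_i} - X_{\tau_{i-1}}\| \geq \eta$ for every $i$, so the sequence $(X_{\tau_i})_i$ has no convergent subsequence, contradicting boundedness in $\mathbb{R}^d$. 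Hence $\tau_\infty = \infty$ almost surely. This pathwise argument sidesteps the need for a two-sided exit-time estimate and uses only the structure of $D([0,\infty);\mathbb{R}^d)$.

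**Proving $\sigma(\cup_i \mathcal{F}_{\tau_i}) = \mathcal{F}_\infty$.** The inclusion $\subseteq$ is immediate since each $\tau_i$ is a stopping time and $\mathcal{F}_{\tau_i} \subseteq \mathcal{F}_\infty$. For the reverse inclusion I would use the representation \eqref{eq:sa}, namely $\mathcal{F}_{\tau_i} = \sigma(X_{t \wedge \tau_i} : t \geq 0)$. Since $\mathcal{F}_\infty$ is generated by the coordinate maps $\{X_t : t \geq 0\}$, it suffices to show that each $X_t$ is measurable with respect to $\sigma(\cup_i \mathcal{F}_{\tau_i})$. Fix $t \geq 0$ and $\omega$; because $\tau_i(\omega) \to \infty$ by the first part, there exists $i$ with $\tau_i(\omega) > t$, whence $X_{t \wedge \tau_i}(\omega) = X_t(\omega)$. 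Thus on the full-measure event $\{\tau_\infty = \infty\}$ we have $X_t = \lim_i X_{t \wedge \tau_i}$ (the limit being eventually constant and equal to $X_t$), and each $X_{t \wedge \tau_i}$ is $\mathcal{F}_{\tau_i}$-measurable by \eqref{eq:sa}. Therefore $X_t$ is $\sigma(\cup_i \mathcal{F}_{\tau_i})$-measurable up to null sets, and since the filtration is completed this gives $\mathcal{F}_\infty \subseteq \sigma(\cup_i \mathcal{F}_{\tau_i})$, completing the proof.

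**The main obstacle** I anticipate is the interface between the almost-sure statement $\tau_i \to \infty$ and the measurability conclusion: the identity $X_t = \lim_i X_{t \wedge \tau_i}$ holds only on $\{\tau_\infty = \infty\}$, so I must invoke completeness of the augmented filtration $\{\mathcal{F}_t\}$ to discard the null set where it could fail. A secondary subtlety is ensuring the strong Markov property is applied legitimately at each $\tau_{i-1}$ — these are genuine stopping times for the right-continuous filtration, and each is finite on $\{\tau_\infty = \infty\}$, so the hypothesis of the strong Markov family applies. Care is also needed to confirm that $\tau_i$ is indeed a stopping time (each being the hitting time of a closed condition by a càdlàg process adapted to a right-continuous filtration), which justifies the use of \eqref{eq:sa}.
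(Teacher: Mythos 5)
Your proposal is correct in substance, and for the first assertion it takes a genuinely different --- and more elementary --- route than the paper. The paper's proof is probabilistic: it first establishes the tail bound \eqref{eq:stbd}, $\mathbb P_{x_0}(\tau_1 \leq t) \leq Ct/\eta^2$, via a supermartingale argument and \cite[Proposition 3.1]{B}, deduces a uniform bound $\mathbb E_{x_0}[e^{-\tau_1}] \leq \gamma < 1$, and then iterates with the strong Markov property to get $\mathbb E_{x_0}[e^{-\tau_i}] \leq \gamma^i$, whence $e^{-\tau_i} \to 0$ a.s.\ by monotonicity. Your argument instead exploits only the structure of $D([0,\infty);\mathbb R^d)$: writing $\tau_\infty = \lim_i \tau_i$, on $\{\tau_\infty < \infty\}$ the left limit $X_{\tau_\infty -}$ exists, the $\tau_i$ increase \emph{strictly} to $\tau_\infty$ (strictness follows from right continuity of the path), so $X_{\tau_i} \to X_{\tau_\infty -}$ and the sequence $(X_{\tau_i})_i$ is Cauchy, contradicting $\|X_{\tau_i} - X_{\tau_{i-1}}\| \geq \eta$ (an inequality which itself holds at the stopping time by right continuity). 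This is valid, requires neither Lemma~\ref{ete} nor the strong Markov property, and in fact yields the stronger, purely deterministic conclusion that $\tau_i(\omega) \to \infty$ for \emph{every} $\omega \in \Omega$; what the paper's heavier route buys is the quantitative estimate \eqref{eq:stbd}, which however is not reused elsewhere. Your proof of the second assertion (the stopped coordinates $X_{t \wedge \tau_i}$ are eventually equal to $X_t$, then apply \eqref{eq:sa} and completeness of the augmented filtration) is essentially identical to the paper's, which runs the same limiting argument with finite-dimensional cylinder sets $D_j = \cap_{1\leq i \leq n}\{X_{t_i \wedge \tau_j} \in B_i\}$ instead of single coordinates.

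One step of your write-up is wrong as stated and should be replaced by your first formulation: in the sentence beginning ``More precisely,'' you claim that boundedness of the path on $[0,\tau_\infty]$ is contradicted because $(X_{\tau_i})_i$ ``has no convergent subsequence.'' Consecutive increments of size at least $\eta$ do \emph{not} preclude convergent subsequences (consider a sequence alternating between two fixed points at distance $\eta$), so boundedness alone yields no contradiction. The contradiction must come from convergence of the \emph{whole} sequence $X_{\tau_i} \to X_{\tau_\infty -}$, i.e.\ from the Cauchy property, exactly as in your opening sentence. With that sentence taken as the actual argument, and the two small verifications noted above (strict increase of the $\tau_i$ and the inequality $\|X_{\tau_i} - X_{\tau_{i-1}}\| \geq \eta$ at the stopping time, both via right continuity), the proof is complete.
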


\begin{proof}
Let us assume that there exists a constant $C>0$ such that for all $x_0 \in \mathbb R^d$ and $t>0$, \begin{equation}\label{eq:stbd}
\mathbb P_{x_0}(\tau_1 \leq t) \leq Ct/\eta^2
\end{equation}
for some constant $C>0$ independent of $x_0$, $t$ and $\eta$. We will prove that $\tau_i \to +\infty$ a.s. assuming \eqref{eq:stbd}. Then, we will prove \eqref{eq:stbd}.

Fix $x_0 \in \mathbb R^d$. For any $s \geq 0$, by \eqref{eq:stbd}, $$
\mathbb P_{x_0}(e^{-\tau_1} \geq s) = \mathbb P_{x_0}(\tau_1 \leq - \ln s) \leq 0 \vee (C(-\ln(s))/ \eta^2 \wedge 1).
$$
By the layer-cake formula and the above bound, \begin{align*}
\mathbb E_{x_0}[e^{-\tau_1}] &= \int_{0}^1 \mathbb P_{x_0}(e^{-\tau_1} \geq s) \dd s \\ &\leq \int_0^{e^{-\eta^2/C}} 1 \dd s + \int_{e^{-\eta^2/C}}^{1} -\frac{C \ln(s)}{\eta^2} \dd s \\ & = e^{-\eta^2/C} + \frac{1 - \eta^2/C - e^{-\eta^2/C}}{\eta^2/C}.
\end{align*}
By increasing $C$ as much as necessary, we can therefore ensure that 
\begin{equation}\label{eq:stexpbd}
\mathbb E_{x_0}[e^{-\tau_1}]  \leq \gamma < 1
\end{equation}
for some $\gamma$ independent of $x_0$. Since $\tau_i \geq \tau_{i-1}$, we have by the Strong Markov property that 
$$
\mathbb E_{x_0}[e^{-\tau_i}] = \mathbb E_{x_0}[\mathbb E_{x_0}[e^{-\tau_{i-1} - (\tau_i - \tau_{i-1})} | \mathcal F_{\tau_{i-1}}]] = \mathbb E_{x_0}[e^{\tau_{i-1}}\mathbb E_{X_{\tau_{i-1}}} e^{-(\tau_i - \tau_{i-1})}]
$$
Applying this inductively with \eqref{eq:stexpbd} we have $\mathbb E_{x_0}[e^{-\tau_i}] \leq \gamma^i$ for all $x_0 \in \mathbb R^d, i \geq 1$.

It follows that $e^{-\tau_i} \to 0$ in $L^1(\Omega, \mathbb P_{x_0})$. However, $e^{-\tau_i}$ is an a.s. decreasing sequence of random variables since $\tau_i \leq \tau_{i+1}$ a.s. Hence, it follows that $e^{-\tau_i} \to 0$ a.s., and $\tau_i \to +\infty$ a.s..

We will now prove \eqref{eq:stbd} by using \cite[Proposition 3.1]{B}. Let $f \in C_b^2(\mathbb R^d)$ be fixed. Note that for all $x \in \R^d$ and $f \in C_b^2(\R^d)$, we have $$
|f(x+h)-f(x) - 1_{\|h\|<1} (\nabla f(x)\cdot h)| \leq 2 (\|f\|_{\infty} \wedge \|H_f\|_{\infty}).
$$
Thus, by \eqref{gsl} and Assumption~\ref{ass}, for all $x \in \mathbb R^d$,
\begin{align}
\|\mathcal Lf(x)\| \leq 2(\|f\|_{\infty} \wedge \|H_f\|_{\infty})\int_{\R^d \setminus \{0\}} K(x,h)\dd h &\leq 2C(\|f\|_{\infty} \wedge \|H_f\|_{\infty})\int_{\R^d \setminus \{0\}} J(h)\dd h \nonumber\\ & \leq 2CK_0 \nonumber
\end{align}
Therefore ,by \eqref{eq:mart} we have $$
M^f_t \leq f(X_t) - f(X_0) - C(\|f\|_{\infty} + \|H_f\|_{\infty})t.
$$
It follows that the right hand side is a $\mathbb P_{x_0}$-supermartingale. Now, \eqref{eq:stbd} is a direct consequence of \cite[Proposition 3.1]{B}.

We will now prove that $\sigma(\cup_{i=1}^\infty \mathcal F_{\tau_i}) = \mathcal F_{\infty}$. Clearly, $\sigma(\cup_{i=1}^\infty \mathcal F_{\tau_i}) \subset \mathcal F_{\infty}$ holds. Thus, we only need to show the reverse inclusion.

Let $t_1,\ldots,t_n \geq 0$ and $B_1,\ldots,B_n \in \mathcal B(\mathbb R^d)$ be arbitrary. Let $D = \cap_{1 \leq i \leq n} \{X_{t_i} \in B_i\}$ and $D_j = \cap_{1 \leq i \leq n} \{X_{t_i \wedge \tau_j} \in B_i\}$ for all $j \geq 1$. Since $\tau_j \to \infty$ a.s. it follows that $1_{D_j} \to 1_D$ a.s. as $j \to \infty$. However, by \eqref{eq:sa}, $D_j \in \mathcal F_{\tau_j} \subset \cup_{i=1}^{\infty} \mathcal F_{\tau_i}$ for all $j \geq 1$. Thus, it follows that $D \subset \sigma(\cup_{i=1}^{\infty} \mathcal F_{\tau_i})$. However, sets of the form $D$ clearly generate $\mathcal F_{\infty}$. The second part of the lemma follows.
\end{proof}

We require the following technical lemma. For this, fix $x_0\in \mathbb R^d$ and let $\mathcal L_{x_0}$ be as in Lemma~\ref{lem:loc}. Suppose that $\{\mathbb P'_{x}\}_{x \in \mathbb R^d}$ is a strong Markov family of solutions to the martingale problem for $\mathcal L_{x_0}$.

\begin{lemma}\label{lem:tech}
Define the measure $\mathbb P'_{x_0} \circ \theta_{\tau_1} (A) = \mathbb P'_{x_0}(\{\omega : \theta_{\tau_1}(\omega) \in A\})$, and let $Q(\omega,\cdot)$ be a regular conditional probability of $\mathbb P'_{x_0} \circ \theta_{\tau_1}$ given $\mathcal F_{\tau_1}$.  For $A \in \mathcal F_{\tau_1}$ and $B \in \mathcal F_{\infty}$, define the event \begin{equation}\label{eq:eab}E_{A,B} = A \cap \{\omega : \theta_{\tau_1}(\omega) \in B\}.\end{equation} Then,
\begin{enumerate}
\item For every $i \geq 0$, $\mathcal F_{\tau_{i+1}} \subset \sigma \{E_{A,B} : A \in \mathcal F_{\tau_1}, B \in \mathcal F_{\tau_i}\}$. Further, $$\mathcal F_{\infty} =\sigma \{E_{A,B} : A \in \mathcal F_{\tau_1}, B \in \mathcal F_{\infty}\}.$$ 
\item For every $x \in \mathbb R^d$, define the measure \begin{equation}\label{eq:q1}
(\mathbb P_x \otimes_{\tau_1} Q)(E_{A,B}) = \mathbb E_{\mathbb P_{x}}[Q(\omega,B)1_{\{\omega \in A\}}]
\end{equation}
for every $A \in \mathcal F_{\tau_1}$ and $B \in \mathcal F_{\infty}$, which can be extended to $\mathcal F_{\infty}$ by the previous part. Then, a right-continuous progressively measurable bounded process $\{M_t\}_{t \geq 0}$ is a $(\mathbb P_x \otimes_{\tau_1} Q)$-martingale if \begin{enumerate}
\item $\{M_{\tau_1 \wedge t}\}_{t \geq 0}$ is a $\mathbb P_{x_0}$- martingale, and
\item For all $\omega \in \Omega$, $\{M_{t} - M_{t \wedge \tau(\omega)}\}_{t\geq 0}$ is a $Q(\omega,\cdot)$-martingale.
\end{enumerate}
\end{enumerate}
\end{lemma}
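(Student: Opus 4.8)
The plan is to prove the two parts separately, exploiting the regenerative (flow) structure of the stopping times $\tau_i$. The crucial algebraic identity is
\[
\tau_{i+1} = \tau_1 + \tau_i \circ \theta_{\tau_1},
\]
which holds because $\tau_1$ is the first time the path leaves the ball of radius $\eta$ about its starting point, so that after shifting by $\tau_1$ the remaining $i$ successive exits are exactly counted by $\tau_i \circ \theta_{\tau_1}$. Combined with the flow relation $X_s \circ \theta_{\tau_1} = X_{s+\tau_1}$, this yields the pathwise decomposition
\[
X_{t \wedge \tau_{i+1}}(\omega) = X_{t \wedge \tau_1}(\omega)\,1_{\{t \le \tau_1\}}(\omega) + X_{(t - \tau_1(\omega)) \wedge \tau_i}(\theta_{\tau_1}\omega)\,1_{\{t > \tau_1\}}(\omega),
\]
valid for every $t \ge 0$, which is the backbone of both parts.

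For Part (1), I would use \eqref{eq:sa} to reduce the inclusion $\mathcal F_{\tau_{i+1}} \subset \sigma\{E_{A,B} : A \in \mathcal F_{\tau_1}, B \in \mathcal F_{\tau_i}\}$ to showing that each $X_{t \wedge \tau_{i+1}}$ is measurable with respect to the right-hand sigma-algebra. The first summand above is $\mathcal F_{\tau_1}$-measurable, and $\mathcal F_{\tau_1}$-measurable functions are generated by $E_{A,\Omega}=A$; likewise $1_{\{\theta_{\tau_1}\in B\}}=1_{E_{\Omega,B}}$. For the second summand I would regard $(u,\omega'') \mapsto X_{(t-u)\wedge\tau_i}(\omega'')$ as a bounded $\mathcal B(\R_+)\otimes\mathcal F_{\tau_i}$-measurable map and apply the functional monotone class theorem: on products $\phi(u)\,1_B(\omega'')$ with Borel $\phi$ and $B\in\mathcal F_{\tau_i}$, the composition $\omega \mapsto \phi(\tau_1(\omega))\,1_{\{\theta_{\tau_1}\omega\in B\}}$ is a product of an $\mathcal F_{\tau_1}$-measurable factor and $1_{E_{\Omega,B}}$, hence a limit of linear combinations of $1_{E_{A,B}}$; the monotone class theorem then extends measurability to the map itself. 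The identity $\mathcal F_\infty = \sigma\{E_{A,B} : A \in \mathcal F_{\tau_1}, B \in \mathcal F_\infty\}$ follows by iterating this inclusion over $i$ and invoking Lemma~\ref{lem:st}, which gives $\mathcal F_\infty = \sigma(\cup_i \mathcal F_{\tau_i})$; the reverse inclusion is immediate since each $E_{A,B}\in\mathcal F_\infty$.

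For Part (2), the strategy is the standard piecing-together argument (cf. \cite{SV}). Since $\{M_t\}$ is bounded, it suffices to verify $\mathbb{E}_{\mathbb P_x \otimes_{\tau_1} Q}[(M_t - M_s)1_G] = 0$ for $0 \le s \le t$ and $G \in \mathcal F_s$. I would split the increment at $\tau_1$ as
\[
M_t - M_s = (M_{t\wedge\tau_1} - M_{s\wedge\tau_1}) + \big[(M_t - M_{t\wedge\tau_1}) - (M_s - M_{s\wedge\tau_1})\big].
\]
The pre-$\tau_1$ term is controlled by hypothesis (a): taking $B=\Omega$ with $Q(\omega,\Omega)=1$ shows that $\mathbb P_x \otimes_{\tau_1} Q$ agrees on $\mathcal F_{\tau_1}$ with its pre-$\tau_1$ marginal, under which $\{M_{t\wedge\tau_1}\}$ is a martingale, so this term integrates to zero. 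The post-$\tau_1$ term is handled by disintegrating against $Q(\omega,\cdot)$: because $Q$ is a regular conditional probability of $\mathbb P'_{x_0}\circ\theta_{\tau_1}$ given $\mathcal F_{\tau_1}$, Fubini's theorem rewrites the expectation of this term as an $\mathbb{E}_{\mathbb P_x}$-average of $Q(\omega,\cdot)$-expectations of increments of $\{M_t - M_{t\wedge\tau_1(\omega)}\}$, which vanish by hypothesis (b).

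The main obstacle I anticipate lies in Part (2): one must carefully express the post-$\tau_1$ increment as a functional of the shifted path so that hypothesis (b) applies under $Q(\omega,\cdot)$, and justify the disintegration when $s,t$ straddle $\tau_1$, treating the three regimes $\{t\le\tau_1\}$, $\{s\le\tau_1<t\}$ and $\{\tau_1<s\}$ separately. The measurability step in Part (1)---converting the time-shifted evaluation $X_{(t-\tau_1)\wedge\tau_i}\circ\theta_{\tau_1}$ into the generating events $E_{A,B}$ via the monotone class theorem---is the other technical point, though it becomes routine once the flow identity for $\tau_{i+1}$ is established.
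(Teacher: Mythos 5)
Your proposal is correct and takes essentially the same approach as the paper: part (1) is the same argument built on the flow identity $\tau_{i+1} = \tau_1 + \tau_i\circ\theta_{\tau_1}$ and \eqref{eq:sa} (the paper phrases the measurability step through a time-reparametrization of the stopped path where you use a functional monotone class argument), and part (2) is precisely the piecing-together proof of \cite[Theorem 6.1.2]{SV}, which the paper invokes by citation after establishing the mixture representation \eqref{eq:mre}. The three-regime splitting and the expression of post-$\tau_1$ increments as functionals of the shifted path that you flag as the main obstacles are exactly the details handled in that cited Stroock--Varadhan argument, so your sketch unpacks rather than deviates from the paper's proof.
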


\begin{proof}
We will first prove (1). By taking $B = \Omega$, the statement is clearly true for $i = 0$. Therefore, we assume that $i > 0$.  

Let $f: \Omega \to \mathbb R$ be $\mathcal F_{\tau_{i+1}}$-measurable. By \eqref{eq:sa} and \cite[Exercise 1.5.6]{SV}, there exists a function $F : (\mathbb R^d)^{\mathbb Z^+} \to \mathbb R$ and an increasing sequence $\{t_n\}_{n \geq 1}$ such that $$
f(\omega) = F(X_{t_1 \wedge \tau_{i+1}}(\omega), X_{t_2 \wedge \tau_{i+1}}(\omega), \ldots).
$$
For $0 \leq s \leq  \tau_1(\omega)$, $s \wedge \tau_{i+1}(\omega)$ can be written as $c \tau_{1}(\omega)$ for some $c \in[0,1]$. On the other hand, for $\tau_1(\omega) \leq s$, $s \wedge \tau_{i+1}(\omega)$ can be written as $c \tau_1(\omega) + (1-c)\tau_{i+1}(\omega)$ for some $c\in[0,1]$. Thus, it follows that $$
\mathcal F_{\tau_{i+1}} = \sigma(\{X_{c\tau_{1}} : c \in [0,1]\} \cup \{X_{c \tau_1 +(1-c)\tau_{i+1}} : c \in [0,1]\}).
$$
Clearly, for every $c \in [0,1]$, $X_{c \tau_1}$ is $\mathcal F_{\tau_1}$-measurable. On the other hand, for any $c \in [0,1]$ and $B \in \mathcal B(\mathbb R^d)$, $$
\{X_{c \tau_1 +(1-c)\tau_{i+1}(\omega)} \in B\} =  E_{\Omega, B'},
$$
where $B' = \{\omega : \theta_{(1-c)\tau_{i}}(\omega) \in B\}$.

Note that $B' \in \mathcal F_{\tau_i}$. Therefore, it follows that $X_{c \tau_1 +(1-c)\tau_{i+1}(\omega)}$ is also measurable with respect to $\sigma \{E_{A,B} : A \in \mathcal F_{\tau_1}, B \in \mathcal F_{\infty}\}$, completing the proof. The second part of (1) follows from the first part and Lemma~\ref{lem:st}. 

Now, we will prove (2). Recall that $Q(\omega,\cdot)$ is the regular conditional probability of $\mathbb P'_{x_0} \circ \theta_{\tau_1}$ given $\mathcal F_{\tau_1}$. For fixed $\omega \in \Omega$, define the measure $\delta_{\omega} \otimes_{\tau_{1}} Q(\omega,\cdot)$ by \begin{equation}\label{eq:delta}
\delta_{\omega} \otimes_{\tau_1} Q(\omega,\cdot)(E_{A,B}) = 1_{\{\omega \in A\}} Q(\omega,B),
\end{equation}
which extends to $\mathcal F_{\infty}$ by part (1).  By \eqref{eq:q1}, for every $A \in \mathcal F_{\tau_1}, B \in \mathcal F_{\infty}$, $$
\mathbb E_{x_0}[\delta_{\omega} \otimes_{\tau_1} Q(\omega,\cdot)(E_{A,B})] = \mathbb P_{x_0} \otimes_{\tau_1} Q (E_{A,B}).
$$
Thus, by part (1) again, it follows that \begin{equation}\label{eq:mre}\mathbb P_{x_0} \otimes_{\tau_1} Q = \mathbb E_{x_0}[(\delta_{\omega} \otimes_{\tau_1} Q(\omega,\cdot)) (\cdot)]\end{equation}

Now let $\{M_t\}_{t \geq 0}$ be any right continuous progressively measurable bounded process. By boundedness, it is also $\mathbb P_{x_0} \otimes_{\tau_1} Q$-integrable. Observe that $\mathcal F_{t}$ is countably generated for each $t \geq 0$ since $\{X_{t}\}_{t \geq 0}$ has right continuous paths. Therefore, \cite[Theorem 1.2.10]{SV} holds in our case.

The proof of part (2) now follows from the same argument as in the proof of \cite[Theorem 6.1.2]{SV}, since \eqref{eq:mre} holds.
\end{proof}

We are now ready to prove Proposition~\ref{prop:loc}.

\begin{proof}[Proof of Proposition~\ref{prop:loc}]

We will use the proof technique of \cite[Theorem 2.2, Section 6]{B}. Recall that $\{\mathbb P_{x}\}_{x \in \mathbb R^d}$ is assumed to be a strong Markov family of solutions to the martingale problem for $\mathcal{L}$. Fix $x_0 \in \mathbb R^d$ and recall the sequence of stopping times $\tau_i$ from \eqref{eq:st}. We will prove that, for each $i \geq 1$, $\mathbb P_{x_0}(A)$ is uniquely determined for each $A \in \mathcal F_{\tau_i}$ by induction. By Lemma~\ref{lem:st}, $\mathcal F_{\infty} = \sigma\left(\cup_{i=1}^{\infty} \mathcal F_{\tau_i}\right)$, therefore $\mathbb P_{x_0}$ is uniquely determined on $\mathcal F_{\infty}$.

We will now work on the base case. Recall that $\{\mathbb P'_{x_0}\}_{x_0 \in \mathbb R^d}$ is a strong Markov family of solutions to the martingale problem for $\mathcal L_{x_0}$, which is uniquely determined by Lemma~\ref{lem:loc}(b) and our assumption. Let $Q^1_{x_0} = \mathbb P_{x_0} \otimes_{\tau_1} Q$ be the measure given in \eqref{eq:q1}. 

We claim that $Q^1_{x_0}$ solves the martingale problem for $\mathcal L_{x_0}$. To do this, let $f \in C_b^2(\mathbb R^d)$ be arbitrary. We must show that $\{M^f_t\}_{t \geq 0}$, given by \eqref{eq:mart}, is a $Q^1_{x_0}$-martingale. By definition, $\{M^f_t\}_{t \geq 0}$ is right-continuous and progressively measurable. Note that for all $x \in \R^d$ and $f \in C_b^2(\R^d)$, we have $$
|f(x+h)-f(x) - 1_{\|h\|<1} (\nabla f(x)\cdot h)| \leq 2 (\|f\|_{\infty} \wedge \|H_f\|_{\infty}).
$$
Thus, by \eqref{gsl} and Assumption~\ref{ass}, for all $x \in \mathbb R^d$,
\begin{align}
\|\mathcal Lf(x)\| \leq 2(\|f\|_{\infty} \wedge \|H_f\|_{\infty})\int_{\R^d \setminus \{0\}} K(x,h)\dd h &\leq 2C(\|f\|_{\infty} \wedge \|H_f\|_{\infty})\int_{\R^d \setminus \{0\}} J(h)\dd h \nonumber\\ & \leq 2CK_0\label{eq:Lbdd2}
\end{align}
Hence, 
$$
|M^f_t|_{\infty} \leq \|f\|_{C_b^2(\mathbb R^d)}(2 + 2CK_0t)
$$
Thus, $M^f_t$ is bounded a.s. for each $t>0$. We will now apply Lemma~\ref{lem:tech}(2) in order to show that it is a $Q^1_{x_0}$-martingale.

Condition (i) of the theorem requires that $\{M^f_{t \wedge \tau_1}\}_{t \geq 0}$ be a $\mathbb P_{x_0}$ martingale. However, observe that for $t>0$ fixed, $$
M^f_{t \wedge \tau_1} = f(X_{t \wedge \tau_1}) - f(x_0) - \int_0^{t \wedge \tau_1} \mathcal L_{x_0} f(X_s) \dd s = f(X_{t\wedge \tau_1}) - f(X_0) - \int_0^{t \wedge \tau_1} \mathcal Lf(X_s) \dd s
$$
by Lemma~\ref{lem:loc}. Thus, $M^f_{t \wedge \tau_1}$ is a $\mathbb P_{x_0}$ martingale by the optional stopping theorem, since $\mathbb P_{x_0}$ is a solution to the martingale problem for $\mathcal{L}$.

Condition (ii) of the theorem requires $\{M^f_t - M^f_{t \wedge \tau_1(\omega)}\}_{t \geq 0}$ to be a $\mathbb P'_{X_{\tau_1}(\omega)}$ martingale for every $\omega \in \Omega$. However, \begin{align*}
M^f_t - M^f_{t \wedge \tau_1(\omega)} &= f(X_t) - f(X_{t \wedge \tau_1(\omega)}) - \int_{t \wedge \tau_1(\omega)}^{t} \mathcal L_{x_0} f(X_s) \dd s
\end{align*}
is a martingale since $\mathbb{P}'_{X_{\tau_1}(\omega)}$ is the solution to the martingale problem for $\mathcal L_{x_0}$ started at $X_{\tau_1(\omega)}$.

Applying Lemma~\ref{lem:tech}(2), it follows that $\{M^f_t\}_{t \geq 0}$ is a martingale. Hence, $Q^1_{x_0}$ solves the martingale problem for $\mathcal L_{x_0}$. By uniqueness, it follows that $Q^1_{x_0} = \mathbb P'_{x_0}$. By \eqref{eq:q1}, for any $A \in \mathcal F_{\tau_1}$ we have $Q^1_{x_0}(A) = \mathbb P_{x_0}(A) = \mathbb P'_{x_0}(A)$. Hence, $\mathbb P_{x_0}(A)$ is uniquely determined on $\mathcal F_{\tau_1}$ by uniqueness of $\mathbb P'_{x_0}$. 

For the induction step, suppose that $\mathbb P_{x_0}$ is uniquely determined on $\mathcal F_{\tau_i}$ for some $i \geq 1$ and every $x_0 \in \mathbb R^d$. By the Strong Markov property, $(\omega,A) \to \mathbb P_{X_{\tau_1}(\omega)}(A)$ is the regular conditional probability distribution of $\mathbb P_{x_0}$ given $\mathcal F_{\tau_1}$.  In particular, if $B \in \mathcal F_{\tau_1}$ and $\theta_{\tau_1}$ denotes the shift operator \eqref{eq:shift}, then 
$$
\mathbb E_{\mathbb P_{x_0}}\left[\{\omega' : \theta_{\tau_1}(\omega') \in B\} | \mathcal F_{\tau_1}\right](\omega) = \mathbb P_{X_{\tau_1}(\omega)}(B).
$$
Thus, it follows that $\mathbb P_{x_0}$ is determined on all sets $A \in \mathcal F_{\tau_1}$, as well as all sets of the form $\{\omega : \theta_{\tau_1}(\omega) \in B\}$, where $B \in \mathcal F_{\tau_i}$. However, this implies that if $$
\Sigma = \sigma\left(E_{A,B} : A \in \mathcal F_{\tau_1}, B \in \mathcal F_{\tau_i}\right),
$$
Then, $\mathbb P_{x_0}$ is determined on $\Sigma$. By Lemma~\ref{lem:tech}(1) we have $\mathcal F_{\tau_{i+1}} \subset \Sigma$. Hence, $\mathbb P_{x_0}$ is determined on $\mathcal F_{\tau_{i+1}}$, completing the proof of the induction step and the proposition.
\end{proof}

\bibliographystyle{plain}
\bibliography{martingale_problem_arXiv}
\end{document}